\DeclareMathOperator{\expectation}{\mathsf{E}}
\DeclareMathOperator{\var}{var}
\newtheorem{theorem}{Theorem}
\newtheorem{proposition}{Proposition}[theorem]
\newtheorem{corollary}{Corollary}[theorem]
\begin{document}
\title{On eventually always hitting points}
\date{\today}
\subjclass[2010]{37A50, 37E05, 37D20, 11J70}
\keywords{Shrinking targets, eventually always hitting points, continued fractions}

\begin{abstract}
  We consider dynamical systems $(X,T,\mu)$ which have exponential
  decay of correlations for either H\"{o}lder continuous functions or
  functions of bounded variation. Given a sequence of balls
  $(B_n)_{n=1}^\infty$, we give sufficient conditions for the set of
  eventually always hitting points to be of full measure. This is the
  set of points $x$ such that for all large enough $m$, there is a $k
  < m$ with $T^k (x) \in B_m$.  We also give an asymptotic estimate as
  $m \to \infty$ on the number of $k < m$ with $T^k (x) \in B_m$. As
  an application, we prove for almost every point $x$ an asymptotic
  estimate on the number of $k \leq m$ such that $a_k \geq m^t$, where
  $t \in (0,1)$ and $a_k$ are the continued fraction coefficients of
  $x$.
\end{abstract}

\author{Charis Ganotaki}
\email{ch.ganotaki@gmail.com}
  
\author{Tomas Persson}
\address{Centre for Mathematical Sciences, Lund University, Box 118, 221 00 Lund, Sweden}
\email{tomasp@maths.lth.se}

\maketitle

\section{Introduction}

Let $T \colon X \to X$ be a dynamical system with an invariant
probability measure $\mu$. We assume that $X$ is a metric space and
that $\mu$ is ergodic.

If $(B_n)_{n=1}^\infty$ is a sequence of balls, it is
often of interest to know whether or not for $\mu$ almost every $x$,
the iterate $T^n (x)$ belongs to $B_n$ for infinitely many $n$. Let us
call the set of such points the set of infinitely often hitting
points,
\[
H_\mathrm{io} = H_\mathrm{io} ((B_n)_{n=1}^\infty) = \{\, x \in X : T^n
(x) \in B_n \text{ for infinitely many } n \,\}.
\]
In some cases, it is possible to prove that $\mu (H_\mathrm{io})$ is
either $0$ or $1$, depending on if $\sum_n \mu (B_n)$ is convergent or
divergent. Such and similar results are called \emph{dynamical
  Borel--Cantelli lemmata} \cite{Haydnetal, Kim, Xing}.

Note that $x \in H_\mathrm{io}$ if and only if
\[
\sum_{k = 1}^\infty \mathbbm{1}_{B_k} (T^k (x)) = \infty.
\]
When $\sum \mu (B_n) = \infty$, it is sometimes possible to prove the
more precise statement, that for $\mu$ almost all points $x$ holds
\[
\lim_{n \to \infty} \frac{\sum_{k = 1}^n \mathbbm{1}_{B_k} (T^k
  (x))}{\sum_{k=1}^n \mu (B_k)} = 1.
\]
Results of this type are known as \emph{strong (dynamical)
  Borel--Cantelli lemmata} \cite{Haydnetal, Kim, Xing}. Hence, a
strong Borel--Cantelli lemma gives us the asymptotic rate at which the
iterate $T^k (x)$ hits the ball $B_k$.

In this paper we shall investigate a set which is closely related to
$H_\mathrm{io}$, namely the set of \emph{eventually always hitting
  points}. This is the set $H_\mathrm{ea}$ of points $x$ such that for
all large enough $m$, there exists a $k \in \{0,1, \ldots, m-1\}$ such
that $T^k (x) \in B_m$. In other words, we have
\begin{align*}
  H_\mathrm{ea} &= \{ \, x \in X : \exists n \text{ s.t. } \forall m
  > n, \exists k \in [0,m-1] \text{ s.t. } T^k (x) \in B_m \,\}
  \\ &= \bigcup_{n=1}^\infty \bigcap_{m = n}^\infty \bigcup_{k =
    0}^{m-1} T^{-k} B_m.
\end{align*}

The name \emph{eventually always hitting points} was introduced by
Kelmer \cite{Kelmer}. However, the Hausdorff dimension of
$H_\mathrm{ea}$ was studied earlier by Bugeaud and Liao
\cite{BugeaudLiao}.

If $\mu (B_n) \to 0$, then $H_\mathrm{ea} \subset H_\mathrm{io}$
modulo a set of measure zero \cite{KirsebomKundePersson}. In this
sense, $H_\mathrm{ea}$ is a smaller set than $H_\mathrm{io}$.

There has recently been several results about the measure of the set
$H_\mathrm{ea}$. Kelmer proved necessary and sufficient conditions for
$\mu (H_\mathrm{ea}) = 1$ in the setting of flows on hyperbolic
manifolds \cite{Kelmer}, and for flows on homogeneous spaces together
with Yu \cite{KelmerYu}. In the general setting, Kelmer proved
\cite{Kelmer} that if $c < 1$ and $\mu (B_n) \leq \frac{c}{n}$ for
infinitely many $n$, then $\mu (H_\mathrm{ea}) = 0$. Other recent
results can be found in the references \cite{KleinbockWadleigh,
  KelmerOh}.

Kleinbock, Konstantoulas and Richter
\cite{KleinbockKonstantoulasRichter} gave for some systems, including
the the Gau\ss-map, conditions that guarantee that either $\mu
(H_\mathrm{ea}) = 0$ or $\mu (H_\mathrm{ea}) = 1$. For the Gau\ss-map
and balls $B_n$ with centres in $0$, they proved that if $\mu (B_n)
\geq \frac{c \log \log n}{n}$ with $c > \frac{1}{\log 2}$, then $\mu
(H_\mathrm{ea}) = 1$, and if $\mu (B_n) \leq \frac{c \log \log n}{n}$
with $c < \frac{1}{\log 2}$, then $\mu (H_\mathrm{ea}) = 0$.

At about the same time, results related to those of Kleinbock,
Konstantoulas and Richter were obtained by Kirsebom, Kunde and Persson
\cite{KirsebomKundePersson}. Their results hold for a wider class of
dynamical systems, but they had to impose stronger conditions on $\mu
(B_n)$ in order to be able to conclude that the measure of
$H_\mathrm{ea}$ is one or zero: For piecewise expanding maps including
the Gau\ss-map, and for some quadratic maps, they proved that if $\mu
(B_n) \geq \frac{c (\log n)^2}{n}$ where $c$ is sufficiently large,
then $\mu (H_\mathrm{ea}) = 1$. The proof of this result uses heavily
that the dynamical system has exponential decay of correlations for
bounded variation against $L^1$ (and in fact, the result is valid for
any such dynamical system). Since exponential decay of correlations
for bounded variation against $L^1$ is only available for certain
one-dimensional systems, it would be interesting to prove a smilar
result under a more common type of correlation decay. This is one of
the goals of this paper.

We will consider dynamical systems which has exponential decay of
correlations for H\"{o}lder continuous functions. This includes for
instance expanding maps, Anosov diffeomorphisms and some
billiards. For such systems, we will prove under some extra regularity
conditions on $\mu$ that $\mu (B_n) \geq n^{-1} (\log n)^{2 +
  \varepsilon}$ for some $\varepsilon > 0$ implies that $\mu
(H_\mathrm{ae}) = 1$. Hence, we impose a sligthly stronger condition
on $\mu (B_n)$ compared to Kirsebom, Kunde and Persson, in order to
conclude that $H_\mathrm{ea}$ has full measure, but the result holds
for many new systems.

To prove these results, we follow the approach of Kleinbock,
Konstantoulas and Richter and consider the sum
\[
Z_m (x) = \sum_{k=0}^{m-1} \mathbbm{1}_{B_m} (T^k x).
\]
Then $x \in H_\mathrm{ea}$ if and only if $\liminf_{m \to \infty} Z_m
(x) \geq 1$.  To obtain that $\mu (H_\mathrm{ea}) = 1$, we will prove
that the expectation
\begin{equation} \label{eq:expectation}
  \expectation \biggl( \biggl( \frac{Z_m}{\expectation Z_m} - 1
  \biggr)^2 \biggr)
\end{equation}
goes to zero, along a subsequence which is not too sparse.

By applying this method to interval maps of the same type as those
considered by Kirsebom, Kunde and Persson, we are able to improve
their result and prove that in this setting the bound $\mu (B_n) \geq
n^{-1} (\log n)^{1 + \varepsilon}$ is enough to conclude that
$H_\mathrm{ea}$ has full measure.

Our method of proof allows us to conclude a strong version of the
statement $\mu (H_\mathrm{ea}) = 1$ (compare with the strong
Borel--Cantelli lemmata mentioned above). Under some regularity
assumptions on $n \mapsto \mu (B_n)$ together with the assumption that
$\mu (B_n) \geq n^{-1 + \varepsilon}$, we prove (Theorem~\ref{the:BV2}
and \ref{the:Holder2}) that
\begin{equation} \label{eq:asymptotics}
  \lim_{m \to \infty} \frac{\sum_{k=0}^{m-1} \mathbbm{1}_{B_m} (T^k
    (x))}{m \mu (B_m)} = 1,
\end{equation}
holds for $\mu$ almost every $x$. This is done by proving that the
expectation in \eqref{eq:expectation} goes to zero as $m \to \infty$,
not only along a subsequence.

As mentioned above, the statement $\mu (H_\mathrm{ea}) = 1$ is
equivalent to the statement that $\liminf_{m \to \infty} Z_m (x) \geq
1$ for almost every $x$, whereas \eqref{eq:asymptotics} gives more
precise asymptotic information about the number of $k < m$ for which
$T^k (x) \in B_m$. Kelmer \cite[Theorem~2]{Kelmer} and Kelmer--Yu
\cite[Theorem~1.8]{KelmerYu} gave in their settings sufficient
conditions for the quotient in \eqref{eq:asymptotics} to be bounded
between two positive constants.

\section{Results}

Let $B_n = B(x_n, r_n)$ be a nested sequence of balls. That is,
we assume that $B_{n+1} \subset B_n$ holds for all $n$. In
particular, $(r_n)_{n=1}^\infty$ is a decreasing sequence. All
our results on $H_\mathrm{ea}$ will be for this type of sequence
of balls. However, in some cases we will also assume that the
balls share the same centre, $x_n = x_0$.

We are going to give conditions on the dynamical system $(X,T,\mu)$
and on the balls that imply that $\mu (H_\mathrm{ea}) = 1$. In order
to do so, we will use estimates on decay of correlations. We consider
two cases.

The first and simplest case is when $X$ is an interval and $T$ has
exponential decay of correlations for bounded variation against
$L^1$. More precisely, we say that $(X,T,\mu)$ has exponential decay of
correlations for functions of bounded variation against $L^1$ if there
are constants $C, \tau > 0$ such that
\[
\biggl| \int f \circ T^n \cdot g \, \mathrm{d} \mu - \int f \,
\mathrm{d} \mu \int g \, \mathrm{d} \mu \biggr| \leq C e^{- \tau n}
\lVert f \rVert_1 \lVert g \rVert_{BV}
\]
holds for all $n \in \mathbbm{N}$ and all measurable functions
$f$ and $g$ (with $f \in L^1$ and $g$ of bounded variation). The
norms are defined by
\[
\lVert f \rVert_1 = \int |f| \, \mathrm{d} \mu, \qquad \lVert g
\rVert_{BV} = \sup |g| + \var g,
\]
where $\var g$ is the total variation of $g$ on $X$. A direct
consequence of this correlation decay is that
\begin{equation} \label{eq:intervalcorrelation}
| \mu (T^{-n} I \cap J) - \mu (I) \mu (J) | \leq 3C e^{-\tau n} \mu
(I),
\end{equation}
whenever $I$ and $J$ are intervals.

Exponential decay of correlations for bounded variation against $L^1$
holds for piecewise expanding interval maps \cite{LasotaYorke,
  Rychlik, Liveranietal} (including the Gau\ss-map) when $\mu$ is a
Gibbs measure, and for quadratic maps for Benedicks--Carleson
parameters \cite{Young}.

The second case is when $(X,T,\mu)$ has exponential decay of
correlations for H\"{o}lder continuous functions, in the sense that
there are constants $C, \tau > 0$ such that
\begin{equation} \label{eq:correlationhoelder}
  \biggl| \int f \circ T^n \cdot g \, \mathrm{d} \mu - \int f \,
  \mathrm{d} \mu \int g \, \mathrm{d} \mu \biggr| \leq C e^{- \tau n}
  \lVert f \rVert \lVert g \rVert
\end{equation}
holds for all $n$ and all H\"{o}lder continuous functions $f$ and
$g$. The norm is the $\alpha$-H\"{o}lder norm with $0 < \alpha
\leq 1$, defined by
\[
\lVert f \rVert = \sup |f| + \inf \{\, C : |f(x) - f(y)| \leq C
|x - y|^\alpha \ \forall x, y \,\}.
\]

Exponential decay of correlations for H\"{o}lder continuous functions
holds for instance for Anosov diffeomorphisms \cite{Bowen}, for
expanding maps, some billiards and other types of piecewise hyperbolic
systems \cite{Young2, Chernov} (for instance, but not only, when $\mu$
is a physical measure). All these systems also satisfy the other
assumptions in Theorem~\ref{the:Holder1} and \ref{the:Holder2} below.

\subsection{Results for interval maps}

We give below two results for interval maps. The first theorem is an
improvement of a similar result by Kirsebom, Kunde and Persson
\cite[Theorem~5]{KirsebomKundePersson}, giving a better lower bound on
$\mu (B_m)$ which implies full measure of $H_\mathrm{ea}$.  The second
theorem provides, in the same setting, an asymptotic result on the
number of $k < m$ such that $T^k (x) \in B_m$. The proofs of these
results are in Section~\ref{sec:BV}.

\begin{theorem} \label{the:BV1}
  Suppose that $(X,T,\mu)$ has exponential decay of correlations
  for bounded variation against $L^1$. Let $(B_m)_{m=1}^\infty$
  be a decreasing sequence of balls with centres in $x_0$ and
  assume that there is a number $m_0$ such that $\mu(B_m) \geq
  m^{-1} (\log m)^{1 + \varepsilon}$ holds for some $\varepsilon
  > 0$ and all $m \geq m_0$. Then $\mu(H_\mathrm{ea}) = 1$.
\end{theorem}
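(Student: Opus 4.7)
Following the introduction, set $Z_m(x) = \sum_{k=0}^{m-1} \mathbbm{1}_{B_m}(T^k x)$, so that $x \in H_\mathrm{ea}$ iff $\liminf_{m \to \infty} Z_m(x) \geq 1$ and, by $T$-invariance, $\expectation Z_m = m \mu(B_m)$. The plan is a Chebyshev--Borel--Cantelli argument for the normalised deviation of $Z_m$ from its mean along a subsequence $(m_\ell)$ chosen so that (i) the second moment of $Z_m/\expectation Z_m - 1$ is summable along $(m_\ell)$, and (ii) the ratios $m_{\ell+1}/m_\ell$ stay close to $1$, so that for $m_\ell \leq m < m_{\ell+1}$ the nesting $B_{m_{\ell+1}} \subset B_m \subset B_{m_\ell}$ loses essentially nothing.

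Since each $B_m$ is an interval, applying \eqref{eq:intervalcorrelation} with $I = J = B_m$ gives $\mu(T^{-n} B_m \cap B_m) \leq \mu(B_m)^2 + 3Ce^{-\tau n} \mu(B_m)$. Expanding $\expectation Z_m^2$ into the double sum over $0 \leq j \leq k < m$, separating the diagonal, and summing the geometric tail yields the standard bound $\var Z_m \leq C_1 \, m \mu(B_m)$, where $C_1$ depends only on $C$ and $\tau$. Dividing by $(\expectation Z_m)^2 = m^2 \mu(B_m)^2$,
\[
\expectation \biggl( \biggl( \frac{Z_m}{\expectation Z_m} - 1 \biggr)^{\!2} \biggr) \leq \frac{C_1}{m \mu(B_m)} \leq \frac{C_1}{(\log m)^{1+\varepsilon}}.
\]
This decays to $0$ but is not summable in $m$, which is the reason a sparse subsequence is needed.

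Fix $s \in \bigl( \tfrac{1}{1+\varepsilon}, 1 \bigr)$ and set $m_\ell = \lfloor \exp(\ell^s) \rfloor$. Define the truncated sum $\tilde Z_\ell(x) = \sum_{k=0}^{m_\ell - 1} \mathbbm{1}_{B_{m_{\ell+1}}}(T^k x)$, of expectation $m_\ell \mu(B_{m_{\ell+1}})$. The same variance computation (with $B_{m_{\ell+1}}$ in place of $B_m$ and $m_\ell$ in place of $m$) gives
\[
\expectation \biggl( \biggl( \frac{\tilde Z_\ell}{\expectation \tilde Z_\ell} - 1 \biggr)^{\!2} \biggr) \leq \frac{C_1}{m_\ell \mu(B_{m_{\ell+1}})} \leq \frac{C_1 \, m_{\ell+1}}{m_\ell \, (\log m_{\ell+1})^{1+\varepsilon}}.
\]
Because $s < 1$, a direct calculation gives $m_{\ell+1}/m_\ell \to 1$, while $(\log m_{\ell+1})^{1+\varepsilon} \asymp \ell^{s(1+\varepsilon)}$ with $s(1+\varepsilon) > 1$; hence the right-hand side is summable in $\ell$. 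Chebyshev's inequality together with the first Borel--Cantelli lemma then give $\tilde Z_\ell(x)/\expectation \tilde Z_\ell \to 1$ almost surely, and since $\expectation \tilde Z_\ell \to \infty$, in fact $\tilde Z_\ell(x) \to \infty$ almost surely.

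Finally, for any $m$ with $m_\ell \leq m < m_{\ell+1}$ the nesting $B_{m_{\ell+1}} \subset B_m$ yields $\mathbbm{1}_{B_m}(T^k x) \geq \mathbbm{1}_{B_{m_{\ell+1}}}(T^k x)$ for every $k$, so $Z_m(x) \geq \tilde Z_\ell(x)$ and therefore $\liminf_{m \to \infty} Z_m(x) = \infty$ almost surely, proving $\mu(H_\mathrm{ea}) = 1$. The genuine obstacle is the simultaneous requirement (i)--(ii): a smaller exponent on $\log m$ in the hypothesis would destroy summability along any subsequence satisfying $m_{\ell+1}/m_\ell \to 1$, while allowing $m_{\ell+1}/m_\ell$ to grow would make $m_\ell \mu(B_{m_{\ell+1}})$ fail to diverge and thus kill the interpolation. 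The window $s \in \bigl( \tfrac{1}{1+\varepsilon}, 1 \bigr)$ is precisely where both conditions coexist, which is how the critical power $1+\varepsilon$ enters the statement.
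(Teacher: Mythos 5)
Your proof is correct, and it takes a genuinely different route from the paper's. The paper works with the dyadic subsequence $m_n = 2^n$ and compensates for the factor-of-$2$ loss of measure across a dyadic block by introducing auxiliary concentric balls $\tilde{B}_m$ with $\mu(\tilde{B}_m) = c\,(\log m)^{1+\varepsilon} m^{-1}$ for a small constant $c < \tfrac{1}{4}$; the measure inequality $\mu(\tilde{B}_{m_n}) \leq 4c\,\mu(B_m)$ is then converted into the containment $\tilde{B}_{m_n} \subset B_m$, and this conversion is exactly where the common-centre hypothesis is used. You instead keep the original balls and define the truncated sum $\tilde{Z}_\ell = \sum_{k=0}^{m_\ell-1} \mathbbm{1}_{B_{m_{\ell+1}}}(T^k x)$, so that the comparison $Z_m \geq \tilde{Z}_\ell$ for $m_\ell \leq m < m_{\ell+1}$ follows from nesting alone; the price is paid in the ratio $m_{\ell+1}/m_\ell$ entering the variance bound. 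This buys you two things: you never need to construct balls of a prescribed measure around $x_0$ (a minor point the paper glosses over), and you never use that the balls are concentric, so you in fact prove the slightly stronger statement for merely nested balls. One small remark: your choice $m_\ell = \lfloor \exp(\ell^s) \rfloor$ is more elaborate than necessary, since your argument only needs $m_{\ell+1}/m_\ell$ \emph{bounded} together with summability of $(\log m_\ell)^{-(1+\varepsilon)}$ over $\ell$ --- both of which already hold for $m_\ell = 2^\ell$, giving $\expectation\bigl(\bigl(\tilde{Z}_\ell/\expectation \tilde{Z}_\ell - 1\bigr)^2\bigr) \leq 2C_1 (\ell \log 2)^{-1-\varepsilon}$; accordingly, your closing heuristic that the ratio must tend to $1$ overstates the constraint, though this affects only the commentary, not the proof.
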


\begin{theorem} \label{the:BV2}
  Suppose that $(X,T,\mu)$ has exponential decay of correlations
  for bounded variation against $L^1$. Let $(B_m)_{m=1}^\infty$
  be a decreasing sequence of balls and assume that $m \mapsto m
  \mu (B_m)$ is increasing\footnote{With a decreasing function
    $f$, we mean a function such that $f(x) \geq f(y)$ whenever
    $x \leq y$. With an increasing function $f$, we mean a
    function such that $f(x) \leq f(y)$ whenever $x \leq y$.},
  and that there is a number $m_0$ such that $\mu(B_m) \geq m^{-1
    + \varepsilon}$ holds for some $\varepsilon > 0$ and all $m
  \geq m_0$. Then
  \[
  \lim_{m \to \infty} \frac{\# \{\, 0 \leq k < m : T^k (x) \in
    B_m \, \}}{m \mu (B_m)} = 1
  \]
  for $\mu$ almost every $x$.
\end{theorem}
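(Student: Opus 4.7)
The plan is to establish the $L^2$-type bound $\expectation((Z_m/\expectation Z_m - 1)^2) = O(m^{-\varepsilon})$ and then upgrade the resulting convergence in probability to almost sure convergence via a polynomial subsequence combined with a monotone sandwich coming from the nested-ball structure.

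For the variance, I would use $T$-invariance to write $\expectation Z_m = m\mu(B_m)$ and expand
\[
\var(Z_m) = m\mu(B_m)\bigl(1 - \mu(B_m)\bigr) + 2\sum_{\ell = 1}^{m-1}(m-\ell)\bigl[\mu(B_m \cap T^{-\ell}B_m) - \mu(B_m)^2\bigr].
\]
Applying \eqref{eq:intervalcorrelation} with $I = J = B_m$ gives $|\mu(B_m \cap T^{-\ell}B_m) - \mu(B_m)^2| \leq 3Ce^{-\tau\ell}\mu(B_m)$, and since $\sum_\ell e^{-\tau\ell}$ is a convergent geometric series, the cross-sum is $O(m\mu(B_m))$. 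Thus $\var(Z_m) = O(m\mu(B_m))$, and the hypothesis $\mu(B_m) \geq m^{-1+\varepsilon}$ yields the claimed $O(m^{-\varepsilon})$ decay.

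To promote this estimate to almost sure convergence, I would fix an integer $N > 1/\varepsilon$ and set $m_j = \lfloor j^N \rfloor$. Chebyshev together with Borel--Cantelli then gives $Z_{m_j}/\expectation Z_{m_j} \to 1$ almost surely. For $m \in [m_j, m_{j+1}]$, the nesting $B_{m_{j+1}} \subset B_m \subset B_{m_j}$ supplies the sandwich
\[
W_j := \sum_{k=0}^{m_j - 1}\mathbbm{1}_{B_{m_{j+1}}}(T^k x) \leq Z_m(x) \leq V_j := \sum_{k=0}^{m_{j+1}-1}\mathbbm{1}_{B_{m_j}}(T^k x).
\]
The same variance computation applied to $W_j$ and $V_j$, combined with the hypothesis that $m \mapsto m\mu(B_m)$ is increasing, gives $\expectation W_j, \expectation V_j \gtrsim j^{N\varepsilon}$, so a second Borel--Cantelli yields $W_j/\expectation W_j \to 1$ and $V_j/\expectation V_j \to 1$ almost surely.

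The hard part will be the interpolation itself, namely relating $\expectation W_j$ and $\expectation V_j$ to $\expectation Z_m$ uniformly for $m$ in the long interval $[m_j, m_{j+1}]$. Using the monotonicity of $m \mapsto m\mu(B_m)$ I would derive the two-sided estimate
\[
\frac{m_j}{m_{j+1}} \leq \frac{\expectation W_j}{\expectation Z_m} \leq 1 \leq \frac{\expectation V_j}{\expectation Z_m} \leq \frac{m_{j+1}}{m_j},
\]
valid for every $m \in [m_j, m_{j+1}]$. Since $m_{j+1}/m_j = (1 + 1/j)^N \to 1$, both sides of this estimate tend to $1$, and the sandwich then delivers $Z_m/\expectation Z_m \to 1$ almost surely, as required. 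The monotonicity assumption on $m\mu(B_m)$ is precisely what makes this sandwich work; without it, the means $\expectation W_j$ and $\expectation V_j$ need not track $\expectation Z_m$ closely between subsequence points.
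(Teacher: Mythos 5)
Your proposal is correct, and the overall strategy --- a second-moment bound via \eqref{eq:intervalcorrelation}, a polynomial subsequence $m_j\sim j^{N}$ with $N\varepsilon>1$ made summable by Chebyshev and Borel--Cantelli, and a sandwich over $[m_j,m_{j+1}]$ exploiting the nesting of the balls and the monotonicity of $m\mapsto m\mu(B_m)$ --- is the same as the paper's. The interpolation step, however, is handled differently, and your version is arguably cleaner. The paper bounds $Z_{m_{n+1}}-S_n\le Z_m\le Z_{m_n}+S_n$ with $S_n=\sum_{k=m_n}^{m_{n+1}-1}\mathbbm{1}_{B_{m_n}}(T^kx)$, and must then run a \emph{separate} variance/Borel--Cantelli argument to show $S_n/\expectation Z_{m_n}\to 0$ almost surely, together with some bookkeeping for the ratios $Q_{n,m}=\expectation Z_{m_n}/\expectation Z_m$. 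You instead fold the tail indices and the ball change into the single quantities $W_j=\sum_{k<m_j}\mathbbm{1}_{B_{m_{j+1}}}(T^kx)$ and $V_j=\sum_{k<m_{j+1}}\mathbbm{1}_{B_{m_j}}(T^kx)$, which sandwich $Z_m$ directly; these are again Birkhoff-type sums of a single ball indicator, so the identical variance estimate applies to them (with $\expectation W_j=m_j\mu(B_{m_{j+1}})\gtrsim j^{N\varepsilon}$ and $\expectation V_j=m_{j+1}\mu(B_{m_j})\gtrsim j^{N\varepsilon}$), and the two-sided mean comparison $m_j/m_{j+1}\le \expectation W_j/\expectation Z_m\le 1\le \expectation V_j/\expectation Z_m\le m_{j+1}/m_j$ follows exactly as you say from the two monotonicity hypotheses. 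This removes the need for the auxiliary estimate on $S_n$ at the cost of introducing the ``mixed'' sums $W_j,V_j$, which are not themselves of the form $Z_{m'}$; the paper's route, on the other hand, isolates the error term $S_n$ explicitly and shows it is of order $1/n$, which is slightly more quantitative. (Minor remark: the convergence of $Z_{m_j}/\expectation Z_{m_j}$ itself is not actually needed in your argument --- only $W_j$ and $V_j$ enter the final sandwich.)
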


The Gau\ss-map $G(x) = \frac{1}{x} \mod 1$ with $\mu$ given by
\[
\mu (E) = \frac{1}{\log 2} \int_E \frac{1}{1+x} \, \mathrm{d}x,
\]
satisfies the assumptions of Theorem~\ref{the:BV2}. This leads to
the following corollary about continued fraction expansions. Any
irrational number $x \in (0,1)$ can be written in a unique way as
a continued fraction
\[
x = \cfrac{1}{a_1 (x) + \cfrac{1}{a_2 (x) + \cfrac{1}{a_3 (x) +
      \ldots}}},
\]
where the numbers $a_1 (x), a_2 (x), \ldots$ are positive
integers and are called the continued fraction coefficients of
$x$. We have $a_k (x) = a$ if and only if $G^{k-1} (x) \in
(\frac{1}{a}, \frac{1}{a+1})$.

\begin{corollary}
  Let $t \in (0,1)$. Then for almost every $x \in (0,1)$ we have
  \[
  \lim_{m \to \infty} \frac{\# \{\, k \leq m : a_k (x) \geq m^t
    \,\}}{m^{1-t}} = \frac{1}{\log 2}.
  \]
\end{corollary}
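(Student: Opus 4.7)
The strategy is to apply Theorem~\ref{the:BV2} to the Gauss map with sequences of balls centred at $0$, chosen to sandwich the set of $x$ for which $a_k(x) \geq m^t$. Since $a_k(x) = \lfloor 1/G^{k-1}(x) \rfloor$ is an integer, the inequality $a_k(x) \geq m^t$ is equivalent to $G^{k-1}(x) \in [0,1/\lceil m^t\rceil]$. The radii $1/\lceil m^t\rceil$ are step functions of $m$, causing $m \mapsto m\mu(B_m)$ to drop at each jump and so fail the monotonicity hypothesis of Theorem~\ref{the:BV2}. I will therefore work with two smoothly varying sequences,
\[
B_m^- = [0,(m^t+1)^{-1}], \qquad B_m^+ = [0,m^{-t}],
\]
for which $B_m^- \subseteq [0,1/\lceil m^t\rceil] \subseteq B_m^+$, yielding the sandwich
\[
\sum_{k=0}^{m-1}\mathbbm{1}_{B_m^-}(G^k x) \;\leq\; \#\{\, 1 \leq k \leq m : a_k(x) \geq m^t \,\} \;\leq\; \sum_{k=0}^{m-1}\mathbbm{1}_{B_m^+}(G^k x).
\]

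Next, I verify the hypotheses of Theorem~\ref{the:BV2} for each of $(B_m^\pm)$. The Gauss map with the Gauss measure has exponential decay of correlations for bounded variation against $L^1$, as already noted in the paper. Both sequences are nested. From $\mu([0,r]) = \frac{1}{\log 2}\log(1+r)$ we get $\mu(B_m^\pm) \sim \frac{m^{-t}}{\log 2}$, so $\mu(B_m^\pm) \geq m^{-1+(1-t)/2}$ for all sufficiently large $m$. The key technical check is monotonicity of $m \mapsto m\mu(B_m^\pm)$; for $B_m^+$ a direct computation gives
\[
\frac{d}{dm}\bigl[ m\log(1+m^{-t}) \bigr] = \log(1+m^{-t}) - \frac{t m^{-t}}{1+m^{-t}} = (1-t)m^{-t} + O(m^{-2t}),
\]
which is strictly positive for all large $m$; the same reasoning applies to $B_m^-$. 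Replacing $B_m^\pm$ by $B_{m_0}^\pm$ for $m \leq m_0$, with $m_0$ large enough that monotonicity holds beyond $m_0$, secures the hypothesis globally without affecting the limit.

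Applying Theorem~\ref{the:BV2} to each sequence then gives, for $\mu$-almost every $x$,
\[
\lim_{m\to\infty}\frac{\sum_{k=0}^{m-1}\mathbbm{1}_{B_m^\pm}(G^k x)}{m\mu(B_m^\pm)} = 1,
\]
and since $m\mu(B_m^\pm) = \frac{m\log(1+r_m^\pm)}{\log 2} \sim \frac{m^{1-t}}{\log 2}$, both bounding sums are almost surely asymptotic to $m^{1-t}/\log 2$. The stated limit is then immediate from the sandwich. The only real obstacle is the monotonicity verification of $m\mu(B_m^\pm)$ via the derivative computation above; everything else is a direct appeal to Theorem~\ref{the:BV2}.
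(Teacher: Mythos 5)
Your proof is correct, and its overall architecture is the same as the paper's: sandwich $\#\{k\le m : a_k(x)\ge m^t\}$ between the hit-counts of two nested sequences of intervals with left endpoint $0$, verify the hypotheses of Theorem~\ref{the:BV2} for each, and divide by $m^{1-t}$. The difference lies in the choice of the sandwiching intervals, and here your version is actually the more careful one. The paper takes $B_n=[0,b_n^{-1})$ with the integer radii $b_n=[n^t]$ and $b_n=\lceil n^t\rceil$ and asserts that $n\mapsto n\mu(B_n)$ is increasing for $n\ge n_0(t)$; your stated reason for avoiding these step-function radii is well founded. Indeed, writing $g(n)=n\log(1+1/b_n)$ with $b_n=[n^t]$, at an integer jump $b_{n+1}=b_n+1=b+1$ one has $g(n+1)-g(n)\approx -n/(b+1)^2+1/(b+1)$, and since $b\sim n^t$ with $t<1$ the negative term $\sim n^{1-2t}$ dominates the positive term $\sim n^{-t}$; so $n\mu(B_n)$ genuinely decreases at each jump (e.g.\ $t=\tfrac12$: $g(10^4)<g(10^4-1)$), and the paper's verification of the monotonicity hypothesis is, strictly speaking, incorrect as stated (it is harmless in substance, since the proof of Theorem~\ref{the:BV2} only uses the monotonicity through $Q_{n,m}\le 1$ and $Q_{n+1,m}\ge 1$, which survive a multiplicative error of size $1+o(1)$, but that is not how the corollary is argued). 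Your smooth radii $m^{-t}$ and $(m^t+1)^{-1}$ satisfy the containments $B_m^-\subseteq[0,1/\lceil m^t\rceil]\subseteq B_m^+$, your derivative computation showing $\frac{d}{dm}[m\log(1+m^{-t})]=(1-t)m^{-t}+O(m^{-2t})>0$ for large $m$ is right (and carries over to $(m^t+1)^{-1}$), the lower bound $\mu(B_m^\pm)\ge m^{-1+(1-t)/2}$ holds since $t<(1+t)/2$, and freezing the balls below $m_0$ legitimately secures the hypothesis globally without affecting the limit. So your route reaches the same conclusion by the same mechanism but with a hypothesis check that actually goes through literally.
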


\begin{proof}
  We consider an increasing sequence of positive integers $b_n$ and
  let $B_n = [0, b_n^{-1})$. Then $G^k (x) \in B_n$ if and only if
    $a_{k+1} (x) \geq b_n$. We have
    \[
    \mu (B_n) = \frac{1}{\log 2} \int_0^\frac{1}{b_n} \frac{1}{1 + x}
    \, \mathrm{d} x = \frac{\log (1 + \frac{1}{b_n})}{\log 2}.
    \]

    Let $b_n = [n^t]$.  Then, $n \mapsto \mu (B_n)$ is decreasing
    since $b_n$ is increasing, and $n \mapsto n \mu (B_n)$ is
    increasing for $n \geq n_0 (t)$. Theorem~\ref{the:BV2} then implies
    that
    \[
    \lim_{m \to \infty} \frac{\# \{\, k \leq m : a_k (x) \geq [m^t]
      \,\}}{m \log (1 + [m]^{-t})} = \frac{1}{\log 2}
    \]
    holds for almost all $x \in (0,1)$. Hence
    \[
    \lim_{m \to \infty} \frac{\# \{\, k \leq m : a_k (x) \geq [m^t]
      \,\}}{m^{1-t}} = \frac{1}{\log 2}
    \]
    for almost all $x$.

    The same argument for $b_n = \lceil n^t \rceil$ gives that
        \[
    \lim_{m \to \infty} \frac{\# \{\, k < m : a_k (x) \geq \lceil m^t
      \rceil \,\}}{m^{1-t}} = \frac{1}{\log 2}
    \]
    for almost all $x$. The statement in the corollary follows.
\end{proof}

\subsection{Results for systems with exponential decay of correlations for H\"{o}lder continuous functions}

Below we give two theorems corresponding to Theorems~\ref{the:BV1} and
\ref{the:BV2}, but assuming instead exponential decay of correlations
for H\"{o}lder continuous functions. The proofs are in
Section~\ref{sec:Holder}. They are similar to the previous proofs, but
the details are different and more involved.

\begin{theorem} \label{the:Holder1}
  Suppose $(X,T,\mu)$ has exponential decay of correlations for
  H\"{o}lder continuous functions and that the measure $\mu$
  satisfies
  \begin{equation} \label{eq:annulus1}
    \mu (B(x_0, r + \rho) \setminus B(x_0,r)) \leq C \rho^\beta,
    \qquad \forall \rho \geq 0,\ \forall 0 < r < 1,
  \end{equation}
  for some constants $C, \beta > 0$ and for some $x_0 \in X$.

  Let $(B_m)_{m=1}^\infty$ be a decreasing sequence of balls with
  centres in $x_0$ and suppose that there is a number $m_0$ such
  that $\mu (B_m) \geq m^{-1} (\log m)^{2 + \varepsilon}$ holds
  for some $\varepsilon > 0$ and all $m \geq m_0$. Then
  $\mu(H_\mathrm{ea}) = 1$.
\end{theorem}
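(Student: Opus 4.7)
The plan is to prove $\mu(H_{\mathrm{ea}})=1$ by showing $\liminf_{m\to\infty}Z_m(x)=\infty$ for $\mu$-a.e.\ $x$, where $Z_m=\sum_{k=0}^{m-1}\mathbbm{1}_{B_m}\circ T^k$ as in the introduction. Along an auxiliary subsequence $(m_k)$ to be chosen, the nestedness of the balls yields the pointwise bound
\[
Z_m(x)\ge S_k(x):=\sum_{j=0}^{m_k-1}\mathbbm{1}_{B_{m_{k+1}}}(T^jx),\qquad m_k\le m<m_{k+1},
\]
so it suffices to show $S_k(x)\to\infty$ almost surely, noting that $\expectation S_k=m_k\mu(B_{m_{k+1}})\to\infty$ under the hypothesis provided $m_{k+1}/m_k\to 1$.

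The core step is a second-moment estimate for $S_k$. By $T$-invariance,
\[
\var(S_k)\le m_k\mu(B_{m_{k+1}})+2\sum_{n=1}^{m_k-1}(m_k-n)\,\bigl|\mu(B_{m_{k+1}}\cap T^{-n}B_{m_{k+1}})-\mu(B_{m_{k+1}})^2\bigr|.
\]
Since $\mathbbm{1}_{B_{m_{k+1}}}$ is not H\"older, I would sandwich it between $\alpha$-H\"older bumps $\varphi^{\pm}$ that agree with the indicator outside an annulus of width $\rho$ about the boundary. The annulus condition \eqref{eq:annulus1} gives $|\int\varphi^{\pm}\,d\mu-\mu(B_{m_{k+1}})|\le C\rho^\beta$, while the H\"older norm is $O(1+\rho^{-\alpha})$. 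Feeding $\varphi^+$ into \eqref{eq:correlationhoelder} yields
\[
\mu(B_{m_{k+1}}\cap T^{-n}B_{m_{k+1}})\le\mu(B_{m_{k+1}})^2+C\mu(B_{m_{k+1}})\rho^\beta+C\rho^{2\beta}+Ce^{-\tau n}(1+\rho^{-\alpha})^2.
\]

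I would then split the sum at a threshold $N$, using the trivial bound $\mu(B\cap T^{-n}B)\le\mu(B)$ for $n<N$ and the H\"older estimate for $n\ge N$. Choosing $N=A\log m_k$ with $A$ large enough and $\rho^\beta\asymp\mu(B_{m_{k+1}})(\log m_k)^{-(1+\varepsilon/2)}$ balances the three error terms and (exploiting $-\log\mu(B_m)=O(\log m)$) leads to
\[
\frac{\var(S_k)}{(\expectation S_k)^2}\le\frac{C}{(\log m_k)^{1+\varepsilon/2}}.
\]
Picking $m_k=\lfloor e^{k^a}\rfloor$ with any $a\in\bigl(\tfrac{2}{2+\varepsilon},1\bigr)$---an interval nonempty precisely because $\varepsilon>0$---gives simultaneously $m_{k+1}/m_k\to 1$ and $\sum_k(\log m_k)^{-(1+\varepsilon/2)}<\infty$. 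Chebyshev plus Borel--Cantelli then delivers $S_k\ge\tfrac{1}{2}\expectation S_k$ for all large $k$ almost surely, whence $Z_m\ge 1$ for all large $m$ and $\mu(H_{\mathrm{ea}})=1$.

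The main obstacle is the balancing of three competing error sources---the annulus-approximation error $\rho^\beta$, the blow-up $\rho^{-\alpha}$ in the H\"older norm of a bump, and the exponential correlation decay $e^{-\tau n}$---against the natural size $\mu(B_{m_{k+1}})^2$. The trivial bound has to be used for $n=O(\log m_k)$ before $e^{-\tau n}$ can overwhelm $\rho^{-\alpha}$, and this contributes one extra $\log$ factor to $\var(S_k)/(\expectation S_k)^2$; a second $\log$ factor is needed to keep the Borel--Cantelli sum convergent along a subsequence slow enough to have $m_{k+1}/m_k\to 1$. Together these account for the $(\log m)^{2+\varepsilon}$ threshold in the hypothesis, in contrast to the $(\log m)^{1+\varepsilon}$ threshold achievable in the BV-against-$L^1$ setting of Theorem~\ref{the:BV1}.
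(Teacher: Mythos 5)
Your argument is correct, and the analytic core is the same as the paper's: mollify the indicators by H\"older bumps, pay $C\rho^\beta$ through the annulus condition \eqref{eq:annulus1} and $\rho^{-\alpha}$ in the H\"older norm of \eqref{eq:correlationhoelder}, split the correlation sum at a gap of order $\log m$, and close with Chebyshev and Borel--Cantelli (this is Proposition~\ref{prop:basicHolder}(ii) in the paper). Where you genuinely diverge is in how the subsequence estimate is upgraded to all $m$. The paper works with the dyadic subsequence $m_n=2^n$, for which the normalized second moment $(\log m)^{-1-\varepsilon/2}$ is summable, and transfers to intermediate $m$ by introducing auxiliary concentric balls $\tilde B_m$ with $\mu(\tilde B_m)=c(\log m)^{2+\varepsilon}m^{-1}$ and $c<\tfrac14$, arranging $\tilde B_{m_n}\subset B_m$ for $m_n<m<m_{n+1}$; that containment uses the common centre $x_0$ in an essential way. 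You instead keep the original balls and use the purely set-theoretic bound $Z_m\ge S_k$, obtained by truncating the orbit sum at $m_k$ and shrinking the target to $B_{m_{k+1}}$, paying for it with the subexponential subsequence $m_k=\lfloor e^{k^a}\rfloor$, $a\in(\tfrac{2}{2+\varepsilon},1)$, which keeps $\sum_k(\log m_k)^{-(1+\varepsilon/2)}$ convergent. Your transfer step needs only nestedness (concentricity enters solely through \eqref{eq:annulus1}), which is a small structural gain. In fact, with your reduction the constraint $a<1$ is not needed: even for $m_k=2^k$ one has $\expectation S_k=m_k\mu(B_{m_{k+1}})\ge\tfrac12(\log m_{k+1})^{2+\varepsilon}\to\infty$ and the normalized variance is still $O((\log m_k)^{-1-\varepsilon/2})$, summable in $k$; so your interval for $a$ is sufficient but not the real constraint. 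Your choice $\rho^\beta\asymp\mu(B_{m_{k+1}})(\log m_k)^{-(1+\varepsilon/2)}$ also differs from the paper's $\rho_m^\beta=m^{-1}$, making the annulus term the dominant one at exactly the summability threshold, but all the error terms (including the exponential one, after choosing $A$ large relative to $\tau^{-1}(2+2\alpha/\beta)$ using $\mu(B_m)^{-1}\le m$) check out.
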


\begin{theorem} \label{the:Holder2}
  Suppose $(X,T,\mu)$ has exponential decay of correlations for
  H\"{o}lder continuous functions and that the measure $\mu$
  satisfies
  \begin{equation} \label{eq:annulus2}
    \mu (B(x_m, r + \rho) \setminus B(x_m,r)) \leq C \rho^\beta,
    \qquad \forall \rho \geq 0,\ \forall 0 < r < 1,
  \end{equation}
  for some constants $C, \beta > 0$ and for a sequence  of points
  $(x_m)_{m=1}^\infty$.

  Let $(B_m)_{m=1}^\infty$ be a decreasing sequence of balls such
  that $x_m$ is the centre of $B_m$. Suppose that $m \mapsto m
  \mu (B_m)$ is increasing, and that there is a number $m_0$ such
  that $\mu (B_m) \geq m^{-1 + \varepsilon}$ holds for some
  $\varepsilon > 0$ and all $m \geq m_0$. Then
  \[
  \lim_{m \to \infty} \frac{\# \{\, 0 \leq k < m : T^k (x) \in
    B_m \, \}}{m \mu (B_m)} = 1.
  \]
\end{theorem}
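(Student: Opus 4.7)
My plan is to follow the strategy outlined in the introduction. Set
\[
Z_m(x) = \sum_{k=0}^{m-1} \mathbbm{1}_{B_m}(T^k x),
\]
so that $\expectation Z_m = m\mu(B_m)$ by $T$-invariance of $\mu$. The goal is to bound the normalised variance $\expectation\bigl((Z_m/\expectation Z_m - 1)^2\bigr)$ tightly enough that, combined with the monotonicity of $m \mapsto m\mu(B_m)$, a sparse subsequence argument yields the almost sure limit. Since the decay of correlations \eqref{eq:correlationhoelder} is available only against Hölder functions, I first approximate $\mathbbm{1}_{B_m}$ by Hölder bumps $\varphi_{m,\rho}^\pm$ with $\varphi_{m,\rho}^- \le \mathbbm{1}_{B_m} \le \varphi_{m,\rho}^+$, differing from the indicator only on an annulus of width $\rho > 0$ around $\partial B_m$. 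Assumption \eqref{eq:annulus2} then gives $\int(\varphi_{m,\rho}^+ - \varphi_{m,\rho}^-)\,\mathrm{d}\mu \le C\rho^\beta$, while a piecewise-affine construction yields Hölder norm $\lVert \varphi_{m,\rho}^\pm \rVert \le 1 + C\rho^{-\alpha}$.

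The main technical step is the variance bound. Expanding
\[
\var(Z_m) = m\mu(B_m)(1-\mu(B_m)) + 2\sum_{n=1}^{m-1}(m-n)\bigl[\mu(B_m \cap T^{-n}B_m) - \mu(B_m)^2\bigr]
\]
and applying \eqref{eq:correlationhoelder} to $\varphi_{m,\rho}^\pm$, I expect the per-term bound
\[
\bigl|\mu(B_m \cap T^{-n}B_m) - \mu(B_m)^2\bigr| \le C\mu(B_m)\rho^\beta + C\rho^{2\beta} + Ce^{-\tau n}\rho^{-2\alpha}.
\]
I then optimise $\rho = \rho_n$ by balancing $\mu(B_m)\rho^\beta$ against $e^{-\tau n}\rho^{-2\alpha}$, i.e.\ $\rho_n^{\beta+2\alpha} = e^{-\tau n}/\mu(B_m)$. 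The constraint $\rho_n^\beta \le \mu(B_m)$, needed to absorb the $\rho^{2\beta}$ term, holds exactly when $n \geq n_\ast$ for a threshold $n_\ast = O(\log(1/\mu(B_m)))$; for $n \le n_\ast$ I use the trivial bound $\mu(B_m \cap T^{-n}B_m) \le \mu(B_m)$. Summing both ranges should give
\[
\var(Z_m) \le C m\mu(B_m)\log(1/\mu(B_m)),
\]
hence $\expectation\bigl((Z_m/\expectation Z_m - 1)^2\bigr) = O(\log m / m^\varepsilon)$ under the hypothesis $\mu(B_m) \ge m^{-1+\varepsilon}$.

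To pass to the almost sure statement, I choose a subsequence $m_j = \lfloor j^c \rfloor$ with $c > 1/\varepsilon$, so that $\sum_j \log m_j / m_j^\varepsilon < \infty$ and $m_{j+1}/m_j \to 1$. Chebyshev's inequality together with Borel--Cantelli then yields $Z_{m_j}/(m_j\mu(B_{m_j})) \to 1$ almost surely. Because the same variance argument applies to the auxiliary counts $W_{t,n}(x) = \sum_{k=0}^{t-1}\mathbbm{1}_{B_n}(T^k x)$ whenever $t$ and $n$ are comparable, I also obtain $W_{m_{j+1},m_j}/(m_{j+1}\mu(B_{m_j})) \to 1$ and $W_{m_j,m_{j+1}}/(m_j\mu(B_{m_{j+1}})) \to 1$ almost surely. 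For $m_j \le m \le m_{j+1}$, nestedness of the balls gives $W_{m_j,m_{j+1}} \le Z_m \le W_{m_{j+1},m_j}$, and the monotonicity of $m \mapsto m\mu(B_m)$ sandwiches $m\mu(B_m)$ between $m_j\mu(B_{m_j})$ and $m_{j+1}\mu(B_{m_{j+1}})$; combined with $m_{j+1}/m_j \to 1$ this forces both the upper and lower bound to $1$. The main obstacle will be the variance estimate itself: juggling three competing error contributions while respecting the constraint $\rho_n^\beta \le \mu(B_m)$ is delicate, and the residual $\log(1/\mu(B_m))$ factor is precisely what prevents a direct Borel--Cantelli argument along the full sequence and makes the monotonicity hypothesis essential in the final interpolation.
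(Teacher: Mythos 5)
Your proposal is correct, and at its core it is the same second-moment strategy as the paper's proof: mollify $\mathbbm{1}_{B_m}$ by H\"{o}lder bumps, control the normalised variance using \eqref{eq:correlationhoelder} together with \eqref{eq:annulus2}, pass to a polynomial subsequence by Chebyshev and Borel--Cantelli, and interpolate using the nestedness of the balls and the monotonicity of $m \mapsto m\mu(B_m)$. Two of your technical choices differ from the paper's in ways worth recording. First, you optimise the mollification width $\rho_n$ separately for each lag $n$ (balancing $\mu(B_m)\rho^\beta$ against $e^{-\tau n}\rho^{-2\alpha}$, with the trivial bound for $n$ below the threshold $n_\ast \asymp \log(1/\mu(B_m))$), which lets you bound the variance of the indicator sum $Z_m$ itself; the paper instead fixes a single width $\rho_m = m^{-1/\beta}$, carries the approximate sums $Z_m^\pm$ throughout, and cuts the double sum at lag $(\log m)^\gamma$. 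Both yield $\var(Z_m) = O(m\mu(B_m)\log m)$, so the outcomes are equivalent, but your version avoids the bookkeeping between $\expectation Z_m^\pm$ and $m\mu(B_m)$. Second, your interpolation sandwiches $Z_m$ between the auxiliary counts $W_{m_j,m_{j+1}}$ and $W_{m_{j+1},m_j}$ and applies the same second-moment bound to these directly, whereas the paper writes $Z_{m_{n+1}}^\pm - S_n^\pm \leq Z_m^\pm \leq Z_{m_n}^\pm + S_n^\pm$ and proves a separate variance estimate for the tail sums $S_n^\pm$ (its Proposition 1.2); your version is arguably cleaner since one lemma covers all three counts. One small point to make explicit when writing this up: for the lower approximant $\varphi^-_{m,\rho}$ you implicitly need $\rho \lesssim r_m$ (or the convention that $\varphi^-_{m,\rho}\equiv 0$ otherwise, in which case the bound $\mu(\varphi^-_{m,\rho}) \geq \mu(B_m) - C\rho^\beta$ still holds since \eqref{eq:annulus2} forces $\mu(B_m)\leq C r_m^\beta \leq C\rho^\beta$); this is harmless in your regime $n > n_\ast$ but should be stated.
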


Note that in Theorem~\ref{the:Holder1} we assume that the balls
have the same centre, but in Theorem~\ref{the:Holder2}, we assume
only that the balls are nested.

\section{Proof of Theorems~\ref{the:BV1} and \ref{the:BV2}} \label{sec:BV}

\subsection{Preparations}

We start with some preparations that will be used in both proofs of
Theorems~\ref{the:BV1} and \ref{the:BV2}. The basic result in this
section, which will be used in both proofs, is the inequality
\eqref{eq:basicineqBV} below.

Since the conclusion of the theorem does not depend on any finite
number of the balls $B_m$, we may assume that $m_0 = 1$ and
$\varepsilon \in (0,1)$.

We let
\[
Z_m (x) = \sum_{k = 0}^{m-1} \mathbbm{1}_{B_m} (T^k x)
= \sum_{k = 0}^{m-1} \mathbbm{1}_{T^{-k} B_m} (x),
\]
so that
\[
Z_m (x) = \# \{\, 0 \leq k < m : T^k (x) \in B_m \, \}.
\]
The proof proceeds in a way similar to how strong Borel--Cantelli
lemmata can be proved, see for instance Galatolo
\cite[Lemma~7]{Galatolo}. However, the proof of Theorem~\ref{the:BV2}
gets more involved than for strong Borel--Cantelli lemmata, since the
terms in the sum $Z_m$ depends not only on $k$ but also on $m$.

By invariance of the measure $\mu$, the expectation of $Z_m$ satisfies
\[
\expectation Z_m := \int Z_m \, \mathrm{d} \mu = m \mu (B_m).
\]
If we put
\[
Y_m = \frac{Z_m}{\expectation Z_m} - 1,
\]
then we have
\[
\expectation (Y_m^2) = \frac{1}{(\expectation Z_m)^2} \bigl(
\expectation (Z_m^2) - (\expectation Z_m)^2 \bigr).
\]

We start by estimating $\expectation (Z_m^2) - (\expectation
Z_m)^2$. From the definition of $Z_m$ follows that
\[
\expectation (Z_m^2) - (\expectation Z_m)^2 = \sum_{j,k = 0}^{m-1}
\biggl( \int \mathbbm{1}_{B_m} \circ T^j \cdot \mathbbm{1}_{B_m} \circ
T^k \, \mathrm{d} \mu - \mu (B_m)^2 \biggr).
\]
We use decay of correlations. By \eqref{eq:intervalcorrelation}, we
have
\begin{align*}
  \expectation (Z_m^2) - (\expectation Z_m)^2 &\leq 6 \sum_{k =
    0}^{m-1} \sum_{j = k}^{m-1} C e^{-\tau (j - k)} \mu (B_m) \\ &
  \leq C_1 \sum_{k=0}^{m-1} \mu (B_m) = C_1 m \mu (B_m).
\end{align*}

Hence, we have
\begin{equation} \label{eq:basicineqBV}
  \expectation (Y_m^2) \leq C_1 \frac{m \mu (B_m)}{m^2 \mu (B_m)^2} =
  \frac{C_1}{m \mu (B_m)}.
\end{equation}
From now on the proofs of Theorems~\ref{the:BV1} and \ref{the:BV2} are
different, and we treat them separately.

\subsection{Proof of Theorem~\ref{the:BV1}} \label{sec:proofBV1}

Let $(\tilde{B}_m)_{m=1}^\infty$ be a sequence of balls such that
$\tilde{B}_m$ has the same centre as $B_m$ and such that $\mu
(\tilde{B}_m) = c (\log m)^{1 + \varepsilon} m^{-1}$ holds for some
constant $c > 0$.

We may form $\tilde{Z}_m$ and $\tilde{Y}_m$ defined
as $Z_m$ and $Y_m$, but with $B_m$ replaced by $\tilde{B}_m$.
Then, the inequality
\[
  \expectation (\tilde{Y}_m^2) \leq \frac{C_1}{m \mu (\tilde{B}_m)}
\]
holds by the same reason as \eqref{eq:basicineqBV}, and we get
\[
\expectation (\tilde{Y}_m^2) \leq \frac{C_1}{c} (\log m)^{-1-\varepsilon}.
\]

We let $m_n = 2^n$, so that $\expectation (\tilde{Y}_{m_n}^2)
\leq \frac{C_1}{c} (n \log 2)^{-1 - \varepsilon}$ is summable. It
follows that almost surely $\tilde{Y}_{m_n} \to 0$ as $n \to
\infty$. In particular, almost surely
\[
\tilde{Z}_{m_n} \geq \frac{1}{2} \expectation \tilde{Z}_{m_n} \geq 1
\]
for all large enough $n$. It follows that for almost all $x$, there is an
$N$ such that for all $n > N$, there is a $k < m_n$ with $T^k (x) \in
\tilde{B}_{m_n}$.

Now, we consider the sequence of balls $(B_m)_{m=1}^\infty$
such that $(B_m)_{m=1}^\infty$ satisfies $\mu (B_m) \geq (\log
m)^{1+\varepsilon} m^{-1}$.  We assume that $x$ is such that an $N$ as
described above exists, which is the case for almost all $x$. Suppose
that $m_N < m_n < m < m_{n+1}$. Then there is a $k < m_n$ such that
$T^k (x) \in \tilde{B}_{m_n}$. Since
\[
\frac{\mu (\tilde{B}_{m_n})}{\mu (\tilde{B}_{m_{n+1}})} = 2 \Bigl(
\frac{n}{n+1} \Bigr)^{1+\varepsilon} \leq 2,
\]
we have $\mu (\tilde{B}_{m_n}) \leq 2 \mu
(\tilde{B}_{m_{n+1}})$. Similary, we have
\[
\frac{\mu (\tilde{B}_{m_{n+1}})}{\mu (B_m)} \leq c
\frac{m}{m_{n+1}} \Bigl( \frac{n+1}{\log m}
\Bigr)^{1+\varepsilon} \leq 2c
\]
if $n$ is large enough. Hence $\mu (\tilde{B}_{m_n}) \leq 4c \mu
(B_m)$ and then, since all balls are centred at $x_0$, we have
\[
\tilde{B}_{m_n} \subset B_m
\]
if we choose $c < \frac{1}{4}$. We then have $T^k (x) \in B_m$.

We have thus proved that for all large enough $m$ there is a $k < m$
such that $T^k (x) \in B_m$. Since this is true for almost every $x$,
we conclude that $H_\mathrm{ea} ((B_m)_{m=1}^\infty)$ is of full
measure, which finishes the proof of Theorem~\ref{the:BV1}.

\subsection{Proof of Theorem~\ref{the:BV2}}

We now assume that $m \mapsto \mu (B_m)$ is decreasing, $m \mapsto m
\mu (B_m)$ is increasing, and that there is a number $m_0$ such that
$\mu(B_m) \geq m^{-1 + \varepsilon}$ holds for some $\varepsilon > 0$
and all $m \geq m_0$.  Then we have by \eqref{eq:basicineqBV} that
\[
\expectation (Y_m^2) \leq C_1 m^{-\varepsilon}
\]
holds.

We may therefore conclude that for almost every $x$, there are
infinitely many $m$ for which $Y_m \approx 0$. However, we want to
prove that this is the case not only for infinitely many $m$ but for
all $m$ which are large enough. To do so we proceed in the following
way.

Put $m_n = \lceil n^\frac{2}{\varepsilon} \rceil$. Then
\[
\expectation (Y_{m_n}^2) \leq C_1 n^{-2},
\]
which is summable in $n$. It follows that for almost all $x$ we have
$Y_{m_n} (x) \to 0$ as $n \to \infty$. In particular, $Y_{m_n} (x)$
and $Y_{m_{n+1}} (x)$ are both small if $n$ is large, and we shall use
this information to prove that if $m$ is a number between $m_n$ and
$m_{n+1}$, then $Y_m (x)$ is small as well.

Suppose that $m_n < m < m_{n+1}$. Since the balls form a decreasing
sequence, we have
\begin{align*}
  Z_m (x) &= \sum_{k=0}^{m_n-1} \mathbbm{1}_{B_m} (T^k x) +
  \sum_{k=m_n}^{m - 1} \mathbbm{1}_{B_m} (T^k x) \\ &\leq
  Z_{m_n} (x) + S_n (x),
\end{align*}
where
\[
S_n (x) = \sum_{k=m_n}^{m_{n+1} - 1} \mathbbm{1}_{B_{m_n}} (T^k x).
\]
Similary, we have
\[
Z_m (x) \geq Z_{m_{n+1}} (x) - S_n (x).
\]

Using these inequalities, we have
\begin{align*}
  Y_m &= \frac{Z_m}{\expectation Z_m} - 1 \leq \frac{Z_{m_n} +
    S_n}{\expectation Z_{m_n}} \frac{\expectation
    Z_{m_n}}{\expectation Z_m} - 1 \\ & = Y_{m_n} \frac{\expectation
    Z_{m_n}}{\expectation Z_m} + \frac{S_n}{\expectation Z_{m_n}}
  \frac{\expectation Z_{m_n}}{\expectation Z_m} + \frac{\expectation
    Z_{m_n}}{\expectation Z_m} - 1
\end{align*}
and
\begin{align*}
  Y_m &= \frac{Z_m}{\expectation Z_m} - 1 \geq \frac{Z_{m_{n+1}} -
    S_n}{\expectation Z_{m_{n+1}}} \frac{\expectation
    Z_{m_{n+1}}}{\expectation Z_m} - 1 \\ & = Y_{m_{n+1}}
  \frac{\expectation Z_{m_{n+1}}}{\expectation Z_m} -
  \frac{S_n}{\expectation Z_{m_{n+1}}} \frac{\expectation
    Z_{m_{n+1}}}{\expectation Z_m} + \frac{\expectation
    Z_{m_{n+1}}}{\expectation Z_m} - 1.
\end{align*}

With $Q_{n,m} = \frac{\expectation Z_{m_n}}{\expectation Z_m}$ we
therefore have
\begin{multline} \label{eq:Yestimate}
  Y_{m_{n+1}} Q_{n+1,m} - \frac{S_n}{\expectation Z_{m_{n+1}}}
  Q_{n+1,m} + Q_{n+1,m} - 1 \\ \leq Y_m \leq Y_{m_n} Q_{n,m} +
  \frac{S_n}{\expectation Z_{m_n}} Q_{n,m} + Q_{n,m} - 1.
\end{multline}

Since $m \mapsto m \mu (B_m)$ is increasing,
\[
Q_{n,m} = \frac{m_n \mu (B_{m_n})}{m \mu (B_m)} \leq 1
\]
and since $m \mapsto \mu (B_m)$ is decreasing,
\[
Q_{n,m} = \frac{m_n \mu (B_{m_n})}{m \mu (B_m)} \geq \frac{m_n}{m}.
\]
Hence $Q_{n,m} \to 1$ as $m \to \infty$. Similarly, we have
\[
Q_{n+1,m} = \frac{m_{n+1} \mu (B_{m_{n+1}})}{m \mu (B_m)} \geq 1
\]
and
\[
Q_{n+1,m} = \frac{m_{n+1} \mu (B_{m_{n+1}})}{m \mu (B_m)} \leq
\frac{m_{n+1}}{m}.
\]
Therefore, $Q_{n+1,m} \to 1$ as $m \to \infty$.

We may now conclude from \eqref{eq:Yestimate} that $Y_m \to 0$ almost
everywhere provided that $\frac{S_n}{\expectation Z_{m_n}},
\frac{S_n}{\expectation Z_{m_{n+1}}} \to 0$ almost everywhere.

Since $\expectation Z_{m_n} = m_n \mu (B_{m_n}) \leq m_{n+1} \mu
(B_{m_{n+1}}) = \expectation Z_{m_{n+1}}$, we always have
\[
0 \leq \frac{S_n}{\expectation Z_{m_{n+1}}} \leq
\frac{S_n}{\expectation Z_{m_n}},
\]
so it is sufficient to prove that $\frac{S_n}{\expectation Z_{m_n}}
\to 0$ almost everywhere. The expectation of $\frac{S_n}{\expectation
  Z_{m_n}}$ is
\[
\frac{\expectation S_n}{\expectation Z_{m_n}} = \frac{m_{n+1} - m_n}{m_n}
= \frac{m_{n+1}}{m_n} - 1 \sim \frac{2}{\varepsilon} \frac{1}{n},
\]
and
\begin{align*}
  \expectation \biggl( \biggl( \frac{S_n}{\expectation Z_{m_n}} & -
  \frac{\expectation S_n}{\expectation Z_{m_n}} \biggr)^2 \biggr) =
  \frac{\expectation (S_n^2) - (\expectation S_n)^2}{(\expectation
    Z_{m_n})^2} \\ &= (m_n \mu(B_{m_n}))^{-2} \sum_{j,k =
    m_n}^{m_{n+1} - 1} \bigl( \mu (T^{-k} B_{m_n} \cap T^{-j} B_{m_n})
  - \mu (B_{m_n})^2 \bigr) \\ & \leq 6 (m_n \mu (B_{m_n}))^{-2}
  \sum_{k=m_n}^{m_{n+1}} \sum_{j = k}^{m_{n+1}} C e^{-\tau (j-k)} \mu
  (B_m) \\ & \leq C_1 m_n^{-2} (m_{n+1} - m_n) \mu (B_{m_n})^{-1} \leq
  C_2 m_n^{-\varepsilon} \leq C_2 n^{-2}.
\end{align*}
The sequence $n^{-2}$ is summable, and it therefore follows that for
almost every $x$,
\[
\frac{S_n (x)}{\expectation Z_{m_n}} \sim \frac{\expectation
  S_n}{\expectation Z_{m_n}} \sim \frac{2}{\varepsilon} \frac{1}{n}
\]
as $n \to \infty$. In particular, it follows that for almost every
$x$, we have $\lim_{n \to \infty} \frac{S_n (x)}{\expectation Z_{m_n}}
= 0$.

We have now proved that $Y_m \to 0$ almost everywhere. This implies
that for almost every $x$
\[
\frac{ \# \{\, 0 \leq k < m : T^k (x) \in B_m \, \}}{m \mu (B_m)}
\to 1, \qquad \text{as } m \to \infty.
\]

\section{Proof of Theorems~\ref{the:Holder1} and \ref{the:Holder2}} \label{sec:Holder}

The proofs of Theorems~\ref{the:Holder1} and \ref{the:Holder2} are
more difficult than those of Theorems~\ref{the:BV1} and \ref{the:BV2},
since the functions $\mathbbm{1}_{B_m}$ are not H\"{o}lder
continuous. We will need to approximate the functions
$\mathbbm{1}_{B_m}$ with H\"{o}lder continuous functions and proceed
as in the proof of Theorems~\ref{the:BV1} and \ref{the:BV2} for the
corresponding approximate versions of $Z_m$.

\subsection{Preparations}

We start with some preparations, that will be useful in both proofs of
Theorems~\ref{the:Holder1} and \ref{the:Holder2}.

As before, since the conclusion of the theorem does not depend on any
finite number of the balls $B_m$, we may assume that $m_0 = 1$ and
$\varepsilon \in (0,1)$.

We let
\[
Z_m (x) = \sum_{k=0}^{m-1} \mathbbm{1}_{B_m} (T^k x).
\]

Let $\rho_m \in (0, r_m)$ and in case of
Theorem~\ref{the:Holder1}, let $x_m = x_0$ for all $m$.  We take
H\"{o}lder continuous function $f_m^-$ and $f_m^+$ such that
$f_m^- \geq f_{m+1}^-$, $f_m^+ \geq f_{m+1}^+$,
\[
\mathbbm{1}_{B(x_m, r_m - \rho_m)} \leq f_m^- \leq \mathbbm{1}_{B(x_m,
  r_m)} = \mathbbm{1}_{B_m} \leq f_m^+ \leq \mathbbm{1}_{B(x_m, r_m +
  \rho_m)},
\]
and such that $\lVert f_m^\pm \rVert \leq 2 \rho_m^{-\alpha}$,
where the norm is the $\alpha$-H\"{o}lder norm.
This can be done for instance by a piecewise
affine interpolation radially around the centre $x_m$ of the ball
$B_m$: Let
\[
f_m^+ (x) = \left\{ \begin{array}{ll} 1 & \text{if } d(x, x_m)
  \leq r_m \\ 1 - \frac{d(x, x_m) - r_m}{\rho_m} & \text{if } r_m
  < d(x, x_m) < r_m + \rho_m \\ 0 & \text{if } d(x, x_m) \geq r_m
  + \rho_m
  \end{array} \right.,
\]
and
\[
f_m^+ (x) = \left\{ \begin{array}{ll} 1 & \text{if } d(x, x_m)
  \leq r_m - \rho_m \\ 1 - \frac{d(x, x_m) - r_m +
    \rho_m}{\rho_m} & \text{if } r_m - \rho_m < d(x, x_m) < r_m
  \\ 0 & \text{if } d(x, x_m) \geq r_m
  \end{array} \right..
\]
In case of Theorem~\ref{the:Holder1}, we have $x_m = x_0$ for all
$m$.

Put
\[
Z_m^- (x) = \sum_{k=0}^{m-1} f_m^- (T^k x), \qquad \text{and} \qquad
Z_m^+ (x) = \sum_{k=0}^{m-1} f_m^+ (T^k x),
\]
so that $Z_m^- \leq Z_m \leq Z_m^+$. For future use, we note that by
\eqref{eq:annulus2} we have
\begin{align}
  m \mu (B_m) - C m \rho_m^\beta & \leq \expectation Z_m^- = m \mu
  (f_m^-) \leq m \mu (B_m), \label{eq:f-measure} \\ m \mu (B_m) & \leq
  \expectation Z_m^+ = m \mu (f_m^+) \leq m \mu (B_m) + C m
  \rho_m^\beta. \label{eq:f+measure}
\end{align}

We put
\[
Y_m^\pm = \frac{Z_m^\pm}{\expectation Z_m^\pm} - 1,
\]
and we have
\[
\expectation ((Y_m^\pm)^2) = \frac{1}{(\expectation Z_m^\pm)^2} \bigl(
\expectation ((Z_m^\pm)^2) - (\expectation Z_m^\pm)^2 \bigr).
\]

Our first step is to prove that almost surely $Y_m^\pm \to 0$ along a
subsequence. This is the content of the following proposition.

\begin{proposition} \label{prop:basicHolder}
  \begin{enumerate}
  \item[i)] Suppose that $\mu (B_m) \geq m^{-1 +
    \varepsilon}$. Let $m_n = [n^{\frac{1}{\varepsilon} + 1}]$
    and $\rho_m = m^{-1/\beta}$. Then almost surely
    \[
    \lim_{n \to \infty} Y_{m_n}^- = \lim_{n \to \infty} Y_{m_n}^+ = 0.
    \]

  \item[ii)] Suppose that $\mu (B_m) \geq m^{-1} (\log m)^{2 +
    \varepsilon}$.  Let $m_n = 2^n$ and $\rho_m = m^{-1/\beta}$. Then
    almost surely
    \[
    \lim_{n \to \infty} Y_{m_n}^- = 0.
    \]
  \end{enumerate}
\end{proposition}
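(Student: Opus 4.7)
The plan is to prove both parts by a standard second moment argument: first establish a bound on $\expectation((Y_m^\pm)^2)$ that is summable along the chosen subsequence $(m_n)$, and then combine Chebyshev's inequality with the Borel--Cantelli lemma to deduce almost sure convergence along $(m_n)$. The key estimate I want to establish, for the choice $\rho_m = m^{-1/\beta}$, is
\[
  \expectation((Y_m^\pm)^2) \leq \frac{C \log m}{m \mu(B_m)}
\]
for some constant $C$ depending on $\tau, \alpha, \beta$ and the constant in \eqref{eq:correlationhoelder}.

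To derive this, I would follow the template of the bounded variation case and write
\[
  \expectation((Z_m^\pm)^2) - (\expectation Z_m^\pm)^2 = \sum_{j, k = 0}^{m-1} \biggl( \int f_m^\pm \circ T^j \cdot f_m^\pm \circ T^k \, \mathrm{d} \mu - \mu(f_m^\pm)^2 \biggr),
\]
but split the double sum into the close pairs $|j - k| \leq L_m$ and the far pairs $|j - k| > L_m$, where $L_m = A \log m$ and $A = A(\tau, \alpha, \beta)$ is to be chosen. For close pairs I would use $0 \leq f_m^\pm \leq 1$ together with $T$-invariance of $\mu$ to bound each term in absolute value by $\mu(f_m^\pm) \leq \mu(B_m) + C \rho_m^\beta$; since there are at most $(2 L_m + 1) m$ such pairs, their total contribution is at most $C_1 L_m m (\mu(B_m) + \rho_m^\beta)$. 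For far pairs I would apply \eqref{eq:correlationhoelder} together with $\lVert f_m^\pm \rVert \leq 2 \rho_m^{-\alpha}$, giving a total contribution of at most $C_2 m \rho_m^{-2 \alpha} e^{-\tau L_m}$. With $\rho_m = m^{-1/\beta}$ we have $m \rho_m^\beta = 1$, so the first piece is at most $C_1 L_m (m \mu(B_m) + 1)$, while choosing $A$ so that $2 \alpha / \beta - \tau A \leq -1$ makes the second piece at most $C_2$. Since $m \mu(B_m) \to \infty$, the estimates \eqref{eq:f-measure}--\eqref{eq:f+measure} yield $\expectation Z_m^\pm \sim m \mu(B_m)$, and dividing by $(\expectation Z_m^\pm)^2$ produces the displayed bound.

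It then remains to insert the two hypotheses on $\mu(B_m)$ and check summability along the given subsequences. In part~(i), $\mu(B_m) \geq m^{-1 + \varepsilon}$ gives $\expectation((Y_m^\pm)^2) \leq C (\log m) / m^\varepsilon$; with $m_n = [n^{1/\varepsilon + 1}]$ we have $m_n^\varepsilon \asymp n^{1 + \varepsilon}$ and $\log m_n \asymp \log n$, so $\expectation((Y_{m_n}^\pm)^2) = O( \log(n) / n^{1 + \varepsilon})$, which is summable in $n$. In part~(ii), $\mu(B_m) \geq m^{-1} (\log m)^{2 + \varepsilon}$ gives $\expectation((Y_m^-)^2) \leq C / (\log m)^{1 + \varepsilon}$; with $m_n = 2^n$ this becomes $\expectation((Y_{m_n}^-)^2) = O(1 / n^{1 + \varepsilon})$, again summable. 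Chebyshev's inequality applied to each $Y_{m_n}^\pm$ together with Borel--Cantelli then forces $Y_{m_n}^\pm \to 0$ almost surely, as required.

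The principal obstacle is the trade-off between the approximation error $\rho_m^\beta$ (which favours large $\rho_m$) and the blow-up $\rho_m^{-\alpha}$ of the H\"{o}lder norm (which favours small $\rho_m$). The balance $\rho_m = m^{-1/\beta}$ renders the approximation contribution $m \rho_m^\beta = 1$ negligible compared to $m \mu(B_m)$, while the polynomial growth of $\lVert f_m^\pm \rVert$ is absorbed into the logarithmic factor $L_m$ appearing in the variance bound; this factor is then comfortably dominated by the two hypotheses on $\mu(B_m)$ combined with the choice of subsequence.
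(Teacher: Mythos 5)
Your proposal is correct and follows essentially the same route as the paper: a second-moment bound obtained by splitting the double sum at a logarithmic gap, using the H\"{o}lder decay of correlations with $\lVert f_m^\pm\rVert \leq 2\rho_m^{-\alpha}$ for the far pairs and the trivial bound $\mu(f_m^\pm)$ for the near pairs, with $\rho_m = m^{-1/\beta}$ making the approximation error negligible. The only (immaterial) difference is the cutoff: you take $L_m = A\log m$ with $A$ tuned so that $m^{2\alpha/\beta}e^{-\tau L_m} \leq m^{-1}$, whereas the paper uses $(\log m)^\gamma$ with $\gamma>1$, which kills the polynomial factor without tuning at the cost of a $(\log m)^\gamma$ rather than $\log m$ in the final bound; both yield summability along the stated subsequences.
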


\begin{proof}
  First we prove statement i). Assume that $\mu (B_m) \geq m^{-1
    + \varepsilon}$.
  
  We have
  \begin{multline*}
    \expectation ((Z_m^\pm)^2) - (\expectation Z_m^\pm)^2 \bigr) =
    \sum_{j,k=0}^{m-1} \biggl( \int f_m^\pm \circ T^j \cdot f_m^\pm
    \circ T^k \, \mathrm{d} \mu - \mu (f_m^\pm)^2 \biggr) \\ = m (\mu
    ((f_m^\pm)^2) - \mu (f_m^\pm)^2) + 2 \sum_{0 \leq k < j < m}
    \biggl( \int f_m^\pm \circ T^{j-k} \cdot f_m^\pm \, \mathrm{d} \mu
    - \mu (f_m^\pm)^2 \biggr).
  \end{multline*}
  The sum will be estimated using decay of correlations. We split the
  sum into two parts $\Sigma_1$ and $\Sigma_2$, defined by
  \begin{align*}
    \Sigma_1 &= \sum_{j=1}^{m-1} \sum_{k=0}^{j - [(\log m)^\gamma]}
    \biggl( \int f_m^\pm \circ T^{j-k} \cdot f_m^\pm \, \mathrm{d} \mu
    - \mu (f_m^\pm)^2 \biggr), \\ \Sigma_2 &= \sum_{j=1}^{m-1}
    \sum_{k=j-[(\log m)^\gamma]-1}^{j-1} \biggl( \int f_m^\pm \circ
    T^{j-k} \cdot f_m^\pm \, \mathrm{d} \mu - \mu (f_m^\pm)^2 \biggr),
  \end{align*}
  where $\gamma > 1$ is a parameter.

  We start by estimating $\Sigma_1$. Here we use
  \eqref{eq:correlationhoelder} and obtain
  \[
  \Sigma_1 \leq \sum_{j=1}^{m-1} \sum_{k=0}^{j - [(\log m)^\gamma]} 4
  C e^{-\tau (j-k)} \rho_m^{-2\alpha} \leq C_1 m \rho_m^{-2 \alpha}
  e^{-\tau (\log m)^\gamma}.
  \]

  Finally, to estimate $\Sigma_2$, we use that $f_m^\pm \leq 1$, so
  that $f_m^\pm \circ T^{j-k} \cdot f_m^\pm \leq f_m^\pm$. Hence
  \begin{align*}
    \Sigma_2 & \leq \sum_{j=1}^{m-1} \sum_{k=j-[(\log
        m)^\gamma]-1}^{j-1} \bigl( \mu (f_m^\pm) - \mu(f_m^\pm)^2
    \bigr) \\ & \leq \sum_{j=1}^{m-1} \sum_{k=j-[(\log
        m)^\gamma]-1}^{j-1} \mu (f_m^\pm) \leq m (\log m)^\gamma \mu
    (f_m^\pm).
  \end{align*}

  Putting these estimates together, we have
  \begin{align*}
    \expectation ((Z_m^\pm)^2) &- (\expectation Z_m^\pm)^2 \bigr) = m
    (\mu ((f_m^\pm)^2) - \mu (f_m^\pm)^2) + 2 (\Sigma_1 + \Sigma_2)
    \\ &\leq C_2 m \Bigl( \mu ((f_m^\pm)^2) + \rho_m^{-2\alpha}
    e^{-\tau (\log m)^\gamma} + (\log m)^\gamma \mu (f_m^\pm) \Bigr) \\
    &\leq C_2 m \Bigl( \mu (f_m^\pm) + \rho_m^{-2\alpha}
    e^{-\tau (\log m)^\gamma} + (\log m)^\gamma \mu (f_m^\pm) \Bigr),
  \end{align*}
  since $(f_m^\pm)^2 \leq f_m^\pm$. Hence
  \begin{align} 
    \expectation ((Y_m^\pm)^2) & \leq C_2 m \frac{\mu (f_m^\pm) +
      \rho_m^{-2 \alpha} e^{-\tau (\log m)^\gamma} + (\log m)^\gamma
      \mu (f_m^\pm)}{(\expectation Z_m^\pm)^2} \nonumber \\ & \leq C_2
    \frac{1}{m \mu (f_m^\pm)} + C_2 \frac{\rho_m^{-2 \alpha} e^{-\tau
        (\log m)^\gamma}}{m \mu (f_m^\pm)^2} + C_2 \frac{(\log
      m)^\gamma}{m \mu (f_m^\pm)}. \label{eq:Ym-estimates}
  \end{align}

  We treat $Y_m^+$ and $Y_m^-$ separately. For $Y_m^-$ we use
  \eqref{eq:f-measure} and get
  \begin{equation} \label{eq:Ym-}
    \frac{\expectation ((Y_m^-)^2)}{C_2} \leq \frac{m^{-1}}{\mu (B_m)
      - C \rho_m^\beta} + \frac{m^{-1} \rho_m^{-2 \alpha} e^{-\tau
        (\log m)^\gamma}}{( \mu (B_m) - C \rho_m^\beta )^2} +
    \frac{m^{-1} (\log m)^\gamma}{\mu (B_m) - C \rho_m^\beta}.
  \end{equation}

  Take $\rho_m = m^{-1/\beta}$. We then have
  \[
  \mu (B_m) - C \rho_m^\beta \geq m^{-1+\varepsilon} - C m^{-1} \geq
  C_3 m^{-1 + \varepsilon}
  \]
  for some constant $C_3$. Hence
  \[
  \expectation ((Y_m^-)^2) \leq \frac{C_2}{C_3} m^{- \varepsilon} +
  \frac{C_2}{C_3^2} m^{-1 + 2 \frac{\alpha}{\beta} + 2 - 2
    \varepsilon} e^{- \tau (\log m)^\gamma} + \frac{C_2}{C_3} (\log
  m)^\gamma m^{-\varepsilon}.
  \]
  Since $\gamma > 1$, we therefore have
  \[
  \expectation ((Y_m^-)^2) \leq C_4 m^{-\varepsilon} (\log m)^\gamma
  \]
  for some constant $C_4$.

  With $m_n = [n^{\frac{1}{\varepsilon} + 1}]$ we get
  \[
  \expectation ((Y_{m_n}^-)^2) \leq C_5 n^{-1 - \varepsilon} (\log
  n)^\gamma
  \]
  and $\expectation ((Y_{m_n}^-)^2)$ is summable over $n$. Hence we
  have almost surely that $Y_{m_n}^- \to 0$ as $n \to \infty$.

  It remains to treat $Y_m^+$, which is done in a similar way as
  $Y_m^-$. Going back to \eqref{eq:Ym-estimates} we get similarly to
  \eqref{eq:Ym-} that
  \[
  \frac{\expectation ((Y_m^+)^2)}{C_2} \leq \frac{m^{-1}}{\mu (B_m)} +
  \frac{m^{-1} \rho_m^{-2 \alpha} e^{-\tau (\log m)^\gamma}}{\mu
    (B_m)^2} + \frac{m^{-1} (\log m)^\gamma}{\mu (B_m)}.
  \]
  Since $\gamma > 1$ we conclude as for $Y_m^-$ that
  \[
  \expectation ((Y_m^+)^2) \leq C_4 m^{-\varepsilon} (\log m)^\gamma.
  \]
  With $m_n = [n^{\frac{1}{\varepsilon} + 1}]$, as for $Y_m^-$
  this implies that $\expectation ((Y_{m_n}^+)^2)$ is summable
  over $n$, and that almost surely $Y_{m_n}^+ \to 0$ as $n \to
  \infty$. This finishes the proof of statement i).

  We now prove statement ii). Assume that $\mu (B_m) \geq m^{-1} (\log
  m)^{2 + \varepsilon}$.

  We have by \eqref{eq:Ym-} that
  \[
    \frac{\expectation ((Y_m^-)^2)}{C_2} \leq \frac{m^{-1}}{\mu (B_m)
      - C \rho_m^\beta} + \frac{m^{-1} \rho_m^{-2 \alpha} e^{-\tau
        (\log m)^\gamma}}{( \mu (B_m) - C \rho_m^\beta )^2} +
    \frac{m^{-1} (\log m)^\gamma}{\mu (B_m) - C \rho_m^\beta}.
  \]
  Take $\rho_m = m^{-1/\beta}$. We then have
  \[
  \mu (B_m) - C \rho_m^\beta \geq m^{-1} (\log m)^{2 + \varepsilon} -
  C m^{-1} \geq C_6 m^{-1} (\log m)^{2 + \varepsilon}
  \]
  for some constant $C_6$. Hence
  \begin{multline*}
  \expectation ((Y_m^-)^2) \leq \frac{C_2}{C_6} (\log m)^{-2 -
    \varepsilon} \\ + \frac{C_2}{C_6^2} m^{-1 + 2 \frac{\alpha}{\beta}}
  (\log m)^{- 2 - \varepsilon} e^{- \tau (\log m)^\gamma} +
  \frac{C_2}{C_6} (\log m)^{\gamma - 2 - \varepsilon}.
  \end{multline*}
  Since $\gamma > 1$, we therefore have
  \[
  \expectation ((Y_m^-)^2) \leq C_7 (\log m)^{\gamma - 2 -
    \varepsilon}
  \]
  for some constant $C_7$. Letting $\gamma = 1 + \frac{1}{2}
  \varepsilon$, we have
  \[
  \expectation ((Y_m^-)^2) \leq C_7 (\log m)^{-1 - \frac{1}{2}
    \varepsilon}.
  \]
  
  We now take $m_n = 2^n$, and get $\expectation ((Y_{m_n}^-)^2) \leq
  C_8 n^{-1 - \frac{1}{2} \varepsilon}$. This is summable over $n$ and
  hence
  \[
  \lim_{n \to \infty} \expectation ((Y_{m_n}^-)^2) = 0. \qedhere
  \]
\end{proof}

\subsection{Proof of Theorem~\ref{the:Holder1}}

Using the second statement of Proposition~\ref{prop:basicHolder}, the
proof of Theorem~\ref{the:Holder1} now runs along the same lines as
that of Theorem~\ref{the:BV1} in Section~\ref{sec:proofBV1}.

Let $(\tilde{B}_m)_{m=1}^\infty$ be a sequence of balls such that
$\tilde{B}_m$ has the same centre as $B_m$ and such that $\mu
(\tilde{B}_m) = c (\log m)^{2 + \varepsilon} m^{-1}$ holds for some
constant $c > 0$.

We may form $\tilde{Z}_m$, $\tilde{Z}_m^-$ and $\tilde{Y}_m^-$
defined as $Z_m$, $Z_m^-$ and $Y_m^-$, but with $B_m$ replaced by
$\tilde{B}_m$.  Then, by Proposition~\ref{prop:basicHolder} and
with $m_n = 2^n$, it follows that almost surely
$\tilde{Y}_{m_n}^- \to 0$ as $n \to \infty$. In particular,
almost surely
\[
\tilde{Z}_{m_n} \geq \tilde{Z}_{m_n}^- \geq \frac{1}{2} \expectation
\tilde{Z}_{m_n}^- \geq 1
\]
for all large enough $n$. It follows that for almost all $x$, there is an
$N$ such that for all $n > N$, there is a $k < m_n$ with $T^k (x) \in
\tilde{B}_{m_n}$.

Now, we consider the sequence of balls $(B_m)_{m=1}^\infty$
such that $(B_m)_{m=1}^\infty$ satisfies $\mu (B_m) \geq (\log
m)^{2+\varepsilon} m^{-1}$.  We assume that $x$ is such that an $N$ as
described above exists, which is the case for almost all $x$. Suppose
that $m_N < m_n < m < m_{n+1}$. Then there is a $k < m_n$ such that
$T^k (x) \in \tilde{B}_{m_n}$. Since
\[
\frac{\mu (\tilde{B}_{m_n})}{\mu (\tilde{B}_{m_{n+1}})} = 2 \Bigl(
\frac{n}{n+1} \Bigr)^{2+\varepsilon} \leq 2,
\]
we have $\mu (\tilde{B}_{m_n}) \leq 2 \mu
(\tilde{B}_{m_{n+1}})$. Similary, we have
\[
\frac{\mu (\tilde{B}_{m_{n+1}})}{\mu (B_m)} \leq c
\frac{m}{m_{n+1}} \Bigl( \frac{n+1}{\log m}
\Bigr)^{2+\varepsilon} \leq 2c
\]
if $n$ is large enough. Hence $\mu (\tilde{B}_{m_n}) \leq 4c \mu
(B_m)$ and then, since all balls are centred at $x_0$, we have
\[
\tilde{B}_{m_n} \subset B_m
\]
if we choose $c < \frac{1}{4}$. We then have $T^k (x) \in B_m$.

We have thus proved that for all large enough $m$ there is a $k < m$
such that $T^k (x) \in B_m$. Since this is true for almost every $x$,
we conclude that $H_\mathrm{ea} ((B_m)_{m=1}^\infty)$ is of full
measure, which finishes the proof of Theorem~\ref{the:Holder1}.

\subsection{Proof of Theorem~\ref{the:Holder2}}
  
We continue as in the proof of Theorem~\ref{the:BV2}. Suppose
that $m \mapsto \mu (B_m)$ is decreasing, $m \mapsto m \mu (B_m)$
is increasing, and that $\mu (B_m) \geq m^{-1 + \varepsilon}$
holds for some $\varepsilon > 0$ and all $m \geq 1$. To prove
Theorem~\ref{the:Holder2}, we do not need to assume that the
balls have the same centres, but we assume that the balls are
nested.

Let $m_n = [n^{\frac{1}{\varepsilon} + 1}]$ and $\rho_m =
m^{-1/\beta}$. Then Proposition~\ref{prop:basicHolder} implies that
almost surely
\[
\lim_{n \to \infty} Y_{m_n}^- = \lim_{n \to \infty} Y_{m_n}^+ = 0.
\]

Suppose that $m_n < m < m_{n+1}$. Since the balls are shrinking, and
$\rho_m$ is decreasing, we have
\begin{align*}
  Z_m^\pm (x) &= \sum_{j=0}^{m_n-1} f_m^\pm (T^j x) +
  \sum_{j=m_n}^{m - 1} f_m^\pm (T^j x) \\ &\leq
  Z_{m_n}^\pm (x) + S_n^\pm (x),
\end{align*}
where
\[
S_n^\pm (x) := \sum_{j=m_n}^{m_{n+1} - 1} f_{m_n}^\pm (T^j x).
\]
Similary, we have
\[
Z_m^\pm (x) \geq Z_{m_{n+1}}^\pm (x) - S_n^\pm (x).
\]

Using these inequalities, we have
\begin{align*}
  Y_m^\pm &= \frac{Z_m^\pm}{\expectation Z_m^\pm} - 1 \leq
  \frac{Z_{m_n}^\pm + S_n^\pm}{\expectation Z_{m_n}^\pm}
  \frac{\expectation Z_{m_n}^\pm}{\expectation Z_m^\pm} - 1 \\ & =
  Y_{m_n}^\pm \frac{\expectation Z_{m_n}^\pm}{\expectation Z_m^\pm} +
  \frac{S_n^\pm}{\expectation Z_{m_n}^\pm} \frac{\expectation
    Z_{m_n}^\pm}{\expectation Z_m^\pm} + \frac{\expectation
    Z_{m_n}^\pm}{\expectation Z_m^\pm} - 1
\end{align*}
and
\begin{align*}
  Y_m^\pm &= \frac{Z_m^\pm}{\expectation Z_m^\pm} - 1 \geq
  \frac{Z_{m_{n+1}}^\pm - S_n^\pm}{\expectation Z_{m_{n+1}}^\pm}
  \frac{\expectation Z_{m_{n+1}}^\pm}{\expectation Z_m^\pm} - 1 \\ & =
  Y_{m_{n+1}}^\pm \frac{\expectation Z_{m_{n+1}}^\pm}{\expectation
    Z_m^\pm} - \frac{S_n^\pm}{\expectation Z_{m_{n+1}}^\pm}
  \frac{\expectation Z_{m_{n+1}}^\pm}{\expectation Z_m^\pm} +
  \frac{\expectation Z_{m_{n+1}}^\pm}{\expectation Z_m^\pm} - 1.
\end{align*}

With $Q_{n,m}^\pm = \frac{\expectation Z_{m_n}^\pm}{\expectation
  Z_m^\pm}$ we therefore have
\begin{multline} \label{eq:Y-estimate}
  Y_{m_{n+1}}^\pm Q_{n+1,m}^\pm - \frac{S_n^\pm}{\expectation
    Z_{m_{n+1}}^\pm} Q_{n+1,m}^\pm + Q_{n+1,m}^\pm - 1 \\ \leq Y_m^\pm
  \leq Y_{m_n}^\pm Q_{n,m}^\pm + \frac{S_n^\pm}{\expectation
    Z_{m_n}^\pm} Q_{n,m}^\pm + Q_{n,m}^\pm - 1.
\end{multline}

By \eqref{eq:f-measure}, we have
\[
Q_{n,m}^- = \frac{\expectation Z_{m_n}^-}{\expectation Z_m^-} \leq
\frac{m_n \mu (B_{m_n})}{m \mu (B_m) - C} \leq \frac{m \mu (B_{m})}{m
  \mu (B_m) - C}
\]
and
\[
Q_{n,m}^- \geq \frac{m_n \mu (B_{m_n}) - C}{m \mu (B_m)} \geq
\frac{m_n \mu (B_{m}) - C}{m \mu (B_{m})} = \frac{m_n}{m} -
\frac{C}{m \mu (B_m)}
\]
when $m$ and $m_n$ are so large that $m \mu(B_m)$ and $m_n \mu
(B_{m_n})$ are larger than $C$.  Using that $m \mu (B_m)$
increases to infinity and that $\frac{m_n}{m} \to 1$, we get that
$Q_{n,m}^- \to 1$ as $m \to \infty$.  Similarly
\[
Q_{n,m}^+ = \frac{\expectation Z_{m_n}^+}{\expectation Z_m^+} \leq
\frac{m_n \mu (B_{m_n}) + C}{m \mu (B_m)} \leq \frac{m \mu (B_{m}) +
  C}{m \mu (B_m)}
\]
and
\[
Q_{n,m}^+ \geq \frac{m_n \mu (B_{m_n})}{m \mu (B_m) + C} \geq
\frac{m_n \mu (B_{m})}{m \mu (B_{m}) + C} = \frac{m_n}{m}
\frac{1}{1 + \frac{C}{m \mu (B_m)}},
\]
implies that $Q_{n,m}^+ \to 1$ as $m \to \infty$.

We also obtain in a similar way that $Q_{n+1,m}^\pm \to 1$ as $m
\to \infty$, and we may conclude from \eqref{eq:Y-estimate} that
$Y_m^- \to 0$ and $Y_m^+ \to 0$ almost everywhere provided that
$\frac{S_n^\pm}{\expectation Z_{m_n}^\pm} \to 0$ and
$\frac{S_n^\pm}{\expectation Z_{m_{n+1}}^\pm} \to 0$ almost
everywhere.

By \eqref{eq:f-measure} and \eqref{eq:f+measure}, we have
$\expectation Z_{m_{n+1}}^\pm \geq \expectation Z_{m_{n}}^\pm -C$
and therefore
\[
0 \leq \frac{S_n^\pm}{\expectation Z_{m_{n+1}}^\pm} \leq
\frac{S_n^\pm}{\expectation Z_{m_n}^\pm - C} =
\frac{S_n^\pm}{\expectation Z_{m_n}^\pm} \frac{1}{1 -
  \frac{C}{\expectation Z_{m_n}^\pm}}.
\]
Since $\expectation Z_{m_{n}}^\pm \to \infty$, we conclude that
$\frac{S_n^\pm}{\expectation Z_{m_n}^\pm} \to 0$ implies that
$\frac{S_n^\pm}{\expectation Z_{m_{n+1}}^\pm} \to 0$.  It is
therefore sufficient to prove that $\frac{S_n^\pm}{\expectation
  Z_{m_n}^\pm} \to 0$ almost everywhere. That this is the case is
the content of the next proposition.

\begin{proposition} \label{prop:Sn}
  Almost everywhere, we have
  \[
  \lim_{n \to \infty} \frac{S_n^-}{\expectation Z_{m_n}^-} = \lim_{n
    \to \infty} \frac{S_n^+}{\expectation Z_{m_n}^+} = 0.
  \]
\end{proposition}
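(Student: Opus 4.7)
The plan is to follow exactly the same pattern used to handle the analogous quantity $S_n/\expectation Z_{m_n}$ in the proof of Theorem~\ref{the:BV2}, but with the variance computation replaced by the one from Proposition~\ref{prop:basicHolder}(i), since we are in the H\"older setting. Write
\[
\frac{S_n^\pm}{\expectation Z_{m_n}^\pm} = \frac{\expectation S_n^\pm}{\expectation Z_{m_n}^\pm} + \frac{S_n^\pm - \expectation S_n^\pm}{\expectation Z_{m_n}^\pm}.
\]
By $T$-invariance of $\mu$, $\expectation S_n^\pm = (m_{n+1}-m_n)\mu(f_{m_n}^\pm)$, so
\[
\frac{\expectation S_n^\pm}{\expectation Z_{m_n}^\pm} = \frac{m_{n+1}-m_n}{m_n}.
\]
With $m_n = [n^{1/\varepsilon + 1}]$ this ratio is $O(1/n)$, hence goes to $0$ deterministically. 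It therefore suffices to show that the centred term tends to $0$ almost surely.

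For this I would estimate $\expectation((S_n^\pm)^2) - (\expectation S_n^\pm)^2$ by expanding the square and using $T$-invariance to write it as
\[
(m_{n+1}-m_n)\bigl(\mu((f_{m_n}^\pm)^2) - \mu(f_{m_n}^\pm)^2\bigr) + 2\!\!\!\sum_{m_n \leq k < j < m_{n+1}}\!\!\!\biggl(\int f_{m_n}^\pm \circ T^{j-k} \cdot f_{m_n}^\pm \,\mathrm{d}\mu - \mu(f_{m_n}^\pm)^2\biggr),
\]
and splitting the double sum at $j-k = [(\log m_n)^\gamma]$ with $\gamma > 1$. For $j - k \geq [(\log m_n)^\gamma]$ I would apply the decay-of-correlations bound \eqref{eq:correlationhoelder} with $\lVert f_{m_n}^\pm \rVert \leq 2\rho_{m_n}^{-\alpha}$; for the near-diagonal contribution I would use $f_{m_n}^\pm \circ T^{j-k} \cdot f_{m_n}^\pm \leq f_{m_n}^\pm$ together with $(f_{m_n}^\pm)^2 \leq f_{m_n}^\pm$. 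This is exactly the splitting used to obtain \eqref{eq:Ym-estimates}, only now with $m_{n+1}-m_n$ outer summation indices instead of $m$.

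Dividing by $(\expectation Z_{m_n}^\pm)^2$ and using \eqref{eq:f-measure}–\eqref{eq:f+measure} with $\rho_{m_n} = m_n^{-1/\beta}$ to get $\expectation Z_{m_n}^\pm \geq C_3 m_n^{\varepsilon}$ (as in the proof of Proposition~\ref{prop:basicHolder}(i)), the three contributions produce a bound of the form
\[
\expectation\!\left(\left(\frac{S_n^\pm - \expectation S_n^\pm}{\expectation Z_{m_n}^\pm}\right)^2\right) \leq C\,\frac{m_{n+1}-m_n}{m_n^2}\bigl(\mu(f_{m_n}^\pm)^{-1} + (\log m_n)^\gamma \mu(f_{m_n}^\pm)^{-1}\bigr) + (\text{exp.\ small term}),
\]
and since $(m_{n+1}-m_n)/m_n = O(1/n)$ and $\mu(f_{m_n}^\pm)^{-1} = O(m_n^{1-\varepsilon})$, the dominant contribution is of order $n^{-1}(\log n)^\gamma m_n^{-\varepsilon} = O(n^{-2-\varepsilon}(\log n)^\gamma)$, which is summable. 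Chebyshev's inequality combined with Borel--Cantelli then gives $\frac{S_n^\pm - \expectation S_n^\pm}{\expectation Z_{m_n}^\pm} \to 0$ almost surely, finishing the proof.

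The computation is essentially routine; the only place demanding care is to check that the polynomial choice $m_n = [n^{1/\varepsilon+1}]$ makes $(m_{n+1}-m_n)/m_n$ small enough (namely $\sim 1/n$) that, after multiplying by the factor $m_n^{1-\varepsilon}(\log m_n)^\gamma$ coming from the short-range variance, the resulting sequence is still summable. The exponent $1/\varepsilon+1$ is precisely the one making this work with room to spare.
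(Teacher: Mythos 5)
Your proposal is correct and follows essentially the same route as the paper: split off the deterministic mean $\frac{m_{n+1}-m_n}{m_n}\sim\bigl(1+\frac{1}{\varepsilon}\bigr)\frac{1}{n}$, bound the variance of $S_n^\pm$ by the same long-range/short-range splitting at gap $[(\log m_n)^\gamma]$ used for Proposition~\ref{prop:basicHolder}, and conclude by Chebyshev and Borel--Cantelli. Your final bound $O(n^{-2-\varepsilon}(\log n)^\gamma)$ is in fact slightly sharper than the paper's stated $O(n^{-1-\varepsilon}(\log n)^\gamma)$, but both are summable, so nothing changes.
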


\begin{proof}
  The expectation of $\frac{S_n^\pm}{\expectation Z_{m_n}^\pm}$
  satisfies
  \[
  \frac{\expectation S_n^\pm}{\expectation Z_{m_n}^\pm} =
  \frac{(m_{n+1} - m_n) \expectation f_{m_n}^\pm}{m_n \expectation
    f_{m_n}^\pm} = \frac{m_{n+1}}{m_n} - 1 \sim \Bigl( 1 +
  \frac{1}{\varepsilon} \Bigr)
  \frac{1}{n},
  \]
  and
  \begin{align*}
    \expectation \biggl( \biggl( \frac{S_n^\pm}{\expectation
      Z_{m_n}^\pm} - \frac{\expectation S_n^\pm}{\expectation
      Z_{m_n}^\pm} \biggr)^2 \biggr) &= \frac{\expectation
      ((S_n^\pm)^2) - (\expectation S_n^\pm)^2}{(\expectation
      Z_{m_n}^\pm)^2} \\ &= (m_n \mu(f_{m_n}^\pm))^{-2}
    \mathfrak{S}_n,
  \end{align*}
  where
  \[
  \mathfrak{S}_n^\pm = \sum_{k,j = m_n}^{m_{n+1} - 1} \bigl( \mu
  (f_{m_n}^\pm \circ T^k f_{m_n}^\pm \circ T^j ) - \mu (f_{m_n}^\pm)^2
  \bigr).
  \]
  We split $\mathfrak{S}_n^\pm$ into two parts,
  $\mathfrak{S}_{n,1}^\pm$ consisting of those $k$ and $j$ such
  that $|k - j| \geq [(\log m_n)^\gamma]$, and
  $\mathfrak{S}_{n,2}^\pm$ consisting of the rest.

  For $\mathfrak{S}_{n,1}^\pm$, we have
  \begin{align*}
    \mathfrak{S}_{n,1}^\pm & \leq 2 \sum_{k = m_n}^{m_{n+1} - 1}
    \sum_{j = k + [(\log m_n)^\gamma]}^\infty C e^{-\tau (j-k)}
    \lVert f_{m_n}^\pm \rVert^2 \\ & \leq C_1 (m_{n+1} - m_n)
    e^{- \tau (\log m_n)^\gamma} \rho_{m_n}^{-2 \alpha}.
  \end{align*}
  Since $\mu (f_{m_n}^\pm \circ T^k f_{m_n}^\pm \circ T^j ) \leq \mu
  (f_{m_n}^\pm) \leq \mu (f_{m_n}^+)$, we have
  \begin{align*}
    \mathfrak{S}_{n,2}^\pm & = 2 (m_{n+1} - m_n) (\log m_n)^\gamma \mu
    (f_{m_n}^+) \\ & \leq 2 (m_{n+1} - m_n) (\log m_n)^\gamma (\mu
    (B_{m_n}) + C \rho_{m_n}^\beta).
  \end{align*}
  Hence, since $\gamma > 1$, we have
  \[
  \mathfrak{S}_n^\pm \leq C_5 (m_{n+1} - m_n) (\log m_n)^\gamma \mu
  (B_{m_n}),
  \]
  for some constant $C_5$.  We therefore get that
  \begin{align*}
    \expectation \biggl( \biggl( \frac{S_n^\pm}{\expectation
      Z_{m_n}^\pm} - \frac{\expectation S_n^\pm}{\expectation
      Z_{m_n}^\pm} \biggr)^2 \biggr) &\leq C_5 \frac{(m_{n+1} - m_n)
      (\log m_n)^\gamma \mu (B_{m_n})}{m_n^2 \mu(f_{m_n}^\pm)^2}
    \\ &\leq C_5 \frac{(m_{n+1} - m_n) (\log m_n)^\gamma \mu
      (B_{m_n})}{m_n^2 \mu(f_{m_n}^-)^2} \\ &\leq C_5 \frac{(m_{n+1} -
      m_n) (\log m_n)^\gamma \mu (B_{m_n})}{(m_n \mu (B_{m_n}) - C m_n
      \rho_{m_n}^{\beta})^2}.
  \end{align*}
  Since $\rho_m = m^{-\frac{1}{\beta}}$ we have
  \begin{align*}
    \expectation \biggl( \biggl( \frac{S_n^\pm}{\expectation
      Z_{m_n}^\pm} - \frac{\expectation S_n^\pm}{\expectation
      Z_{m_n}^\pm} \biggr)^2 \biggr) & \leq C_6 \frac{(m_{n+1} - m_n)
      (\log m_n)^\gamma}{m_n^2 \mu (B_{m_n})} \\ & \leq C_6
    \Bigl(\frac{m_{n+1}}{m_n} - 1 \Bigr) m_n^{-\varepsilon} (\log
    m_n)^\gamma \\ & \leq 2 C_6 n^{-1 - \varepsilon} \Bigl( \Bigl(
    \frac{1}{\varepsilon} + 1 \Bigr) \log n \Bigr)^\gamma
  \end{align*}
  for some constant $C_6$. This is summable over $n$, and it therefore
  follows that for almost every $x$,
  \[
  \frac{S_n^\pm (x)}{\expectation Z_{m_n}^\pm} \sim
  \frac{\expectation S_n^\pm}{\expectation Z_{m_n}^\pm} \sim
  \Bigl( 1 + \frac{1}{\varepsilon} \Bigr) \frac{1}{n}
  \]
  as $n \to \infty$. In particular, it follows that for almost every
  $x$, we have $\lim_{n \to \infty} \frac{S_n^- (x)}{\expectation
    Z_{m_n}^-} = \lim_{n \to \infty} \frac{S_n^+ (x)}{\expectation
    Z_{m_n}^+} = 0$.
\end{proof}

By Proposition~\ref{prop:Sn}, it follows that $\lim_{m\to\infty}
Y_m^- (x) = 0$ for almost every $x$. This implies that for almost
every $x$
\[
\liminf_{m \to \infty} \frac{ \# \{\, 0 \leq k < m : T^k (x) \in B_m
  \, \}}{m \mu (B_m)} \geq 1.
\]
Proposition~\ref{prop:Sn} also implies that $\lim_{m\to\infty} Y_m^+
(x) = 0$ for almost every $x$, and this implies that for almost every
$x$
\[
\limsup_{m \to \infty} \frac{ \# \{\, 0 \leq k < m : T^k (x) \in B_m
  \, \}}{m \mu (B_m)} \leq 1.
\]
We have therefore proved that for almost every $x$ we have that
\[
\lim_{m \to \infty} \frac{ \# \{\, 0 \leq k < m : T^k (x) \in B_m \,
  \}}{m \mu (B_m)} = 1.
\]

\end{document}